\documentclass[12pt]{amsart}

\usepackage[T1]{fontenc}
\usepackage{lmodern}
\usepackage{amsmath,amssymb,amsthm,mathtools}
\usepackage[a4paper,margin=30mm]{geometry}
\usepackage{microtype}
\usepackage[hidelinks]{hyperref}
\usepackage{enumitem}
\usepackage[numbers,sort]{natbib}
\bibpunct{[}{]}{;}{n}{}{}

\newcommand{\T}{\mathbb T^d}
\newcommand{\Z}{\mathbb Z^d}
\newcommand{\Zp}{\mathbb N_0^d}
\newcommand{\D}{\mathbb D^d}
\newcommand{\cD}{\mathcal D}
\newcommand{\cF}{\mathcal F}
\newcommand{\cB}{\mathcal B}
\newcommand{\cX}{\mathcal X}
\newcommand{\Op}{\operatorname{Op}}
\newcommand{\supp}{\operatorname{supp}}
\newcommand{\dd}{\,\mathrm d}
\newcommand{\ip}[2]{#1\mathbin{\cdot}#2}
\newcommand{\tdist}{\operatorname{dist}_{\mathbb T^d}}
\newcommand{\la}{\langle}
\newcommand{\ra}{\rangle}
\newcommand{\wh}{\widehat}
\newcommand{\norm}[1]{\left\lVert#1\right\rVert}

\theoremstyle{plain}
\newtheorem{theorem}{Theorem}[section]
\newtheorem{proposition}[theorem]{Proposition}
\newtheorem{lemma}[theorem]{Lemma}
\newtheorem{corollary}[theorem]{Corollary}
\theoremstyle{definition}
\newtheorem{definition}[theorem]{Definition}
\theoremstyle{remark}
\newtheorem{remark}[theorem]{Remark}

\title{Holomorphic Toroidal Pseudodifferential Operators on the Polydisk}
\author[M. Nielsen]{Morten Nielsen}
\thanks{This work was supported by the Independent Research Fund Denmark, grant no.\ 5281-00046B}
\address{Department of Mathematical Sciences\\
  Aalborg University\\
  Thomas Manns Vej 23\\
  DK--9220 Aalborg\\
  Denmark}
\email{mnielsen@math.aau.dk}

\subjclass[2020]{Primary 47G30; Secondary 42B35, 46E35, 32A37}
\keywords{Toroidal pseudodifferential operator, triangular symbol,
  $(\rho,\delta)$ symbol class, Besov space, Triebel--Lizorkin space,
  holomorphic function space, Ruzhansky--Turunen calculus,
  positive-frequency distribution, polydisk}

\begin{document}

\begin{abstract}
We give a necessary and sufficient triangular condition characterizing the
toroidal pseudodifferential operators on $\mathbb T^d$ that preserve the
positive-frequency cone $\mathbb N_0^d$ and hence act on holomorphic boundary
values on the polydisk.  For symbols of type $(1,0)$, we prove boundedness on
holomorphic Besov and Triebel--Lizorkin spaces throughout the quasi-Banach
range.  For $S^m_{\rho,\delta}$, $0\leq\delta<\rho\leq1$, we obtain the
critical loss $d(1-\rho)|1/p-1/2|$, together with endpoint estimates.  A
positive-cone oscillatory multiplier proves sharpness of the loss and
necessity of the Besov endpoint condition $q\leq t$.  As an application, we prove
well-posedness for a first-order holomorphic differential operator on these
scales.
\end{abstract}

\maketitle

\section{Introduction}

Ruzhansky and Turunen developed a systematic theory of pseudodifferential
operators on the torus \cite{RT2010,RTbook}.  They showed that the
Kohn--Nirenberg quantization on $\mathbb R^d$ has a discrete analogue in
which the frequency variable ranges over $\mathbb Z^d$, and they developed
a symbolic calculus parallel to the Euclidean theory, including
composition, parametrices, and Sobolev boundedness.  See also Georgiadis and
Nielsen \cite{GN2016} for Euclidean mixed-norm Besov and Triebel--Lizorkin
estimates.  The toroidal calculus provides a convenient setting for
questions that are awkward to formulate on $\mathbb R^d$, since the
discrete frequency variable itself carries global information.
Distributions on $\T$ supported in the positive cone $\Zp$ are precisely
the distributional boundary values, on the distinguished boundary, of
holomorphic functions on the polydisk $\D=\{z\in\mathbb{C}^d:|z_j|<1\}$.  For background on boundary
function theory in polydisks,
see Rudin \cite{Rudin1969}.

This identification links the toroidal calculus to function-theoretic
operator theory on $\D$. Besov and Triebel--Lizorkin spaces of holomorphic
functions are defined here by restricting the periodic scales
$B^s_{p,q}(\T)$ and $F^s_{p,q}(\T)$ of Peetre \cite{Peetre1976} and
Triebel \cite{Triebel1983} to the positive cone.  When $d=1$, these are the
boundary-value spaces studied by Kyriazis and Petrushev \cite{KP2014} and
by Cleanthous et al.\ \cite{CGN2019}; we adopt the analogous boundary-value definition on the polydisk. A toroidal pseudodifferential
operator that happens to preserve positive-cone distributions therefore descends, through
this boundary identification, to an operator on the holomorphic scale,
provided one already has the corresponding boundedness statement on the
full, two-sided periodic scale. 
Boundedness on the full two-sided periodic scales, obtained through toroidal calculus and Littlewood--Paley methods of
Georgiadis--Nielsen type \cite{RT2010,GN2016}, together with Euclidean
$(\rho,\delta)$ analogues such as those of Park \cite{Park2018},
supply exactly this ingredient; what is missing is a description of which
toroidal symbols induce operators compatible with the holomorphic
restriction in the first place.

The multidimensional theory has more structure than merely a coordinatewise repetition of
the one-dimensional argument.  The positive orthant $\mathbb N_0^d$ is a
distinguished frequency cone, and the triangular condition introduced below
is precisely the symbolic condition for its invariance.  This formulation
applies in every dimension and yields operators on holomorphic function
spaces of several complex variables.  It also makes explicit the
dimensional dependence of the rough-symbol theory: for $0<\rho<1$, the
critical loss inherited from the Euclidean theory is
\[
 d(1-\rho)\left|\frac1p-\frac12\right|.
\]
Thus, cone invariance and the dimension-dependent $(\rho,\delta)$ bounds are
distinct structural features of the multidimensional theory.

Fix $d\in\mathbb N$ and let
$\mathbb T^d=\mathbb R^d/(2\pi\mathbb Z)^d$, with normalized Haar measure
$\dd x/(2\pi)^d$.  For $f\in L^1(\T)$, its Fourier coefficients are
\begin{equation}\label{eq:fourier-coefficients}
 \wh f(n):=\frac1{(2\pi)^d}\int_{[0,2\pi]^d}
 f(x)e^{-i\ip n x}\dd x,\qquad n\in\Z,
\end{equation}
and the definition extends in the usual way to $f\in\cD'(\T)$, the space of
periodic distributions, by duality with trigonometric polynomials.  The
normalizing factor in Haar integrals over $\T$ is occasionally suppressed
in estimates below, since it only changes fixed constants.  The
distributional boundary value of a holomorphic function on $\D$ has Fourier
spectrum in $\Zp$.  The appropriate distribution space is therefore
\[
 \cD'_+(\T):=\{f\in\cD'(\T):\wh f(n)=0\text{ for }n\notin\Zp\}.
\]
The toroidal quantization of a symbol $a(x,n)$ is
\begin{equation}\label{eq:quantization}
 A_a f(x)=a(X,D)f(x):=\sum_{n\in\Z}a(x,n)\wh f(n)e^{i\ip n x}.
\end{equation}
For a general $x$-dependent symbol, $A_a$ does not preserve $\cD'_+(\T)$.
One of the main aims of this article is to identify the exact
Fourier-triangular condition that guarantees this invariance and to prove
boundedness on the full scales of holomorphic Besov and Triebel--Lizorkin
spaces.

The symbol condition below is strictly weaker than requiring $a(\cdot,n)$ to
be holomorphic.  It allows negative components in the $x$-frequency,
provided that the resulting shift of an input frequency $n$ does not cross
a face of the positive cone.  In particular, the resulting symbol class
includes both forward and backward coordinate shifts.

The structure of the paper is as follows: Section~\ref{sec:preliminaries}
introduces the triangular symbol class and the periodic Besov and
Triebel--Lizorkin scales; Section~\ref{sec:boundedness} establishes the
dyadic almost-orthogonality and maximal-function estimates that drive the
ambient boundedness theorem; Section~\ref{sec:main} characterizes
positive-frequency invariance and combines it with ambient boundedness to
prove the main theorem on the holomorphic scales; and
Section~\ref{sec:extensions} extends the results to a sharp boundedness result for the general
$S^m_{\rho,\delta}$ classes. Finally, in Section \ref{sec:application} we 
apply the theory to a specific first-order
holomorphic differential equation.

\section{Preliminaries: symbols and function spaces}\label{sec:preliminaries}

This section fixes the notation used throughout the paper.  We first
introduce the triangular toroidal symbol classes, and then recall the periodic Besov and
Triebel--Lizorkin scales on which the associated operators act, see
Section~\ref{sec:spaces}.

\subsection{Toroidal symbols and the triangular condition}\label{sec:symbols}

For $g:\Z\to\mathbb C$ and the standard coordinate vectors $e_j$, let
\[
 \Delta_{n_j}g(n):=g(n+e_j)-g(n),\qquad
 \Delta_n^\alpha:=\Delta_{n_1}^{\alpha_1}\cdots
 \Delta_{n_d}^{\alpha_d}.
\]
The subscript distinguishes these discrete differences from the
Littlewood--Paley projections $\Delta_j$ used below.
As usual, the bracket is given by $\la n\ra=(1+|n|^2)^{1/2}$ and standard multiindex notation is used below. Let us recall the toroidal symbol class
$S^m_{1,0}(\T\times\Z)$ introduced by Ruzhansky and Turunen
\cite{RT2010,RTbook}.

\begin{definition}\label{def:symbol}
Let $m\in\mathbb R$.  The toroidal symbol class
$S^m_{1,0}(\T\times\Z)$ consists of the functions
$a:\T\times\Z\to\mathbb C$ which are smooth in $x$ and satisfy
\begin{equation}\label{eq:symbol-estimate}
 \sup_{x\in\T,n\in\Z}
 \la n\ra^{-m+|\alpha|}
 \left|\Delta_n^\alpha\partial_x^\beta a(x,n)\right|<\infty
\end{equation}
for every $\alpha,\beta\in\mathbb N_0^d$.
\end{definition}

We may expand the symbol in the first variable:
\begin{equation}\label{eq:x-fourier}
 a(x,n)=\sum_{\nu\in\Z}a_\nu(n)e^{i\ip\nu x},\qquad
 a_\nu(n)=\frac1{(2\pi)^d}\int_{[0,2\pi]^d}
 a(x,n)e^{-i\ip\nu x}\dd x.
\end{equation}
We also record that the distributional kernel of $A_a$ is
\[
 K_a(x,y)=\sum_{n\in\Z}e^{i\ip n{(x-y)}}a(x,n).
\]

The class of positive triangular symbols will be central in this study.

\begin{definition}\label{def:triangular}
A symbol $a\in S^m_{1,0}(\T\times\Z)$ is called
\emph{positive triangular} if
\begin{equation}\label{eq:triangular}
 a_\nu(n)=0
 \quad\text{whenever}\quad n\in\Zp\quad\text{and}\quad n+\nu\notin\Zp.
\end{equation}
We write $S^m_{1,0,\triangle}$ for this class.
\end{definition}

Condition \eqref{eq:triangular} is necessary as well as sufficient in a suitable sense; see
Proposition~\ref{prop:invariance} and the discussion  in Remark \ref{rem:sym} below.  The following lemma gives the
fundamental decay estimate for the Fourier coefficients of symbols in
$S^m_{1,0}(\T\times\Z)$.

\begin{lemma}\label{lem:coefficients}
If $a\in S^m_{1,0}(\T\times\Z)$, then for every
$\alpha\in\mathbb N_0^d$ and $L\in\mathbb N_0$ there is $C_{\alpha,L}$ such that
\begin{equation}\label{eq:coefficient-estimate}
 |\Delta_n^\alpha a_\nu(n)|
 \leq C_{\alpha,L}(1+|\nu|)^{-L}\la n\ra^{m-|\alpha|}.
\end{equation}
\end{lemma}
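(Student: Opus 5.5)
The plan is to integrate by parts in $x$ in the formula \eqref{eq:x-fourier} for $a_\nu(n)$, after first noting that the difference operator commutes with the $x$-Fourier coefficient. Since $\Delta_n^\alpha$ acts only in the discrete variable $n$ and the integral is over $x$, linearity gives
\[
 \Delta_n^\alpha a_\nu(n)=\frac1{(2\pi)^d}\int_{[0,2\pi]^d}
 \Delta_n^\alpha a(x,n)\,e^{-i\ip\nu x}\dd x .
\]
Here $\Delta_n^\alpha a(x,n)$ is a finite linear combination of translates $a(x,n+k)$, so it is smooth in $x$. Moreover, $\partial_x^\beta$ commutes with $\Delta_n^\alpha$.

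The case $\nu=0$ follows directly from \eqref{eq:symbol-estimate} with $\beta=0$, since the integral of a function bounded by $C\la n\ra^{m-|\alpha|}$ against a unimodular weight over a normalized measure is again bounded by $C\la n\ra^{m-|\alpha|}$.

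For $\nu\neq0$, pick an index $j$ with $|\nu_j|\geq|\nu|/\sqrt d$, or equivalently $|\nu_j|=\max_k|\nu_k|$. Integrate by parts $L$ times in $x_j$, using
\[
 e^{-i\ip\nu x}=(-i\nu_j)^{-1}\partial_{x_j}e^{-i\ip\nu x}.
\]
All boundary terms vanish because the integrand is $2\pi$-periodic in each variable. The result is
\[
 \Delta_n^\alpha a_\nu(n)=\frac{(-i\nu_j)^{-L}}{(2\pi)^d}\int_{[0,2\pi]^d}
 \partial_{x_j}^L\Delta_n^\alpha a(x,n)\,e^{-i\ip\nu x}\dd x .
\]
Applying \eqref{eq:symbol-estimate} with $\beta=Le_j$ gives
\[
 |\Delta_n^\alpha a_\nu(n)|\leq C|\nu_j|^{-L}\la n\ra^{m-|\alpha|}.
\]
Since $|\nu_j|\geq|\nu|/\sqrt d$ and $|\nu|\geq1$, we have $(1+|\nu|)\leq 2|\nu|\leq 2\sqrt d\,|\nu_j|$. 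Hence $|\nu_j|^{-L}\leq (2\sqrt d)^L(1+|\nu|)^{-L}$.

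Combining the two cases and taking $C_{\alpha,L}$ to be the maximum of the finitely many seminorm constants involved, namely those for $\beta=0$ and $\beta=Le_j$ with $j=1,\dots,d$, times $(2\sqrt d)^L$, gives \eqref{eq:coefficient-estimate}.

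No real obstacle is expected. The only points needing care are justifying that $\Delta_n^\alpha$ passes inside the integral, which holds because it is a finite sum, and that the constants are uniform in $n$ and $\nu$, which holds because the symbol seminorms are suprema over $x$ and $n$.
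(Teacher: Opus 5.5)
Your proposal is correct and follows the paper's proof essentially verbatim: you commute $\Delta_n^\alpha$ with the coefficient integral, treat $\nu=0$ directly, and for $\nu\neq0$ integrate by parts $L$ times in a dominant coordinate $x_j$ before invoking the symbol estimate with $\beta=Le_j$. The only slip is cosmetic: each integration by parts contributes a sign, so the prefactor is $(i\nu_j)^{-L}$ rather than $(-i\nu_j)^{-L}$, which does not matter once you take absolute values.
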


\begin{proof}
The difference operators commute with the Fourier coefficient integral.  For
$\nu\ne0$, choose $j$ with $|\nu_j|\geq|\nu|/\sqrt d$ and integrate by
parts $L$ times in $x_j$ to obtain
\[
 \Delta_n^\alpha a_\nu(n)
 =(i\nu_j)^{-L}\frac1{(2\pi)^d}\int_{[0,2\pi]^d}
 \partial_{x_j}^L\Delta_n^\alpha a(x,n)e^{-i\ip\nu x}\dd x.
\]
Estimate \eqref{eq:symbol-estimate} proves the assertion for $\nu\ne0$.
The case $\nu=0$ follows directly from \eqref{eq:symbol-estimate}.
\end{proof}

With the symbol classes fixed, we turn to the function spaces on which the
associated operators act.

\subsection{Periodic Besov and Triebel--Lizorkin spaces}\label{sec:spaces}

Choose $\varphi_0,\varphi\in C_c^\infty(\mathbb R^d)$ such that
\[
 \supp\varphi_0\subset\{|\xi|\leq2\},\qquad
 \supp\varphi\subset\{1/2\leq|\xi|\leq2\},
\]
and
\[
 \varphi_0(\xi)+\sum_{j=1}^\infty\varphi(2^{-j}\xi)=1,
 \qquad \xi\in\mathbb R^d.
\]
Set $\varphi_j(\xi)=\varphi(2^{-j}\xi)$ for $j\geq1$ and
\[
 \Delta_jf:=\varphi_j(D)f
   =\sum_{n\in\Z}\varphi_j(n)\wh f(n)e^{i\ip n x}.
\]

For $s\in\mathbb R$ and $0<p,q\leq\infty$, define
\begin{equation}\label{eq:besov-norm}
 \norm{f}_{B^s_{p,q}(\T)}
 :=\norm{\{2^{js}\norm{\Delta_jf}_{L^p(\T)}\}_{j\geq0}}_{\ell^q}.
\end{equation}
For $s\in\mathbb R$, $0<p<\infty$, and $0<q\leq\infty$, define
\begin{equation}\label{eq:f-norm}
 \norm{f}_{F^s_{p,q}(\T)}
 :=\norm{\{2^{js}\Delta_jf\}_{j\geq0}}_{L^p(\T;\ell^q)}.
\end{equation}
Here $L^p(\T;\ell^q)$ denotes the mixed-norm space of sequences
$\{g_j(x)\}_{j\geq0}$ of functions on $\T$ with finite quasi-norm
$\norm{\{g_j\}_j}_{L^p(\T;\ell^q)}:=\norm{\,\norm{\{g_j(x)\}_j}_{\ell^q}\,}_{L^p_x(\T)}$,
i.e., the $\ell^q$ norm is taken pointwise in $x$ before the $L^p$ norm is
taken in $x$.  The usual changes are made when an exponent equals infinity.  Different
admissible resolutions give equivalent quasi-norms.  These are the periodic
counterparts of the classical (quasi-)Banach scales of Peetre
\cite{Peetre1976} and Triebel \cite{Triebel1983}.

The following positive-frequency subspaces
\begin{align*}
 B^s_{p,q,+}(\T)&:=B^s_{p,q}(\T)\cap\cD'_+(\T),\\
 F^s_{p,q,+}(\T)&:=F^s_{p,q}(\T)\cap\cD'_+(\T)
\end{align*}
will be central to this study.
\section{Boundedness on the periodic scales}\label{sec:boundedness}

Boundedness of $A_a$ on the periodic Besov and Triebel--Lizorkin scales is
proved by a dyadic almost-orthogonality argument.  We first establish
pointwise kernel and maximal-function estimates, and then assemble them into the ambient boundedness
theorem in Section~\ref{sec:ambient}.

\subsection{Dyadic estimates for toroidal operators}\label{sec:dyadic}

We give a self-contained version of the almost-orthogonality estimate needed
below.  Let $\tdist(x,y)$ denote the geodesic distance on $\T$.
For $0<r<\infty$, write
\[
 \mathcal M_rg(x):=\bigl(M(|g|^r)(x)\bigr)^{1/r},
\]
where $M$ is the Hardy--Littlewood maximal operator over balls in $\T$.

Choose $\psi_0,\psi\in C_c^\infty(\mathbb R^d)$ so that
$\psi_j=1$ on a neighbourhood of $\supp\varphi_j$, with the same type of
dyadic support, and put $\Psi_j=\psi_j(D)$.  Then
$f=\sum_{k\geq0}\Psi_k\Delta_kf$, with harmless finite overlaps.  For
notational economy below we apply $A_a$ to an arbitrary function $g_k$ whose
Fourier transform is supported in $|n|\leq C2^k$ and, for $k\geq1$, in an
annulus $c2^k\leq|n|\leq C2^k$.

\begin{lemma}\label{lem:kernel}
Let $a\in S^m_{1,0}(\T\times\Z)$.  For every $N,L>0$ there is a constant
$C_{N,L}$ such that the kernel $K_{j,k}$ of
$\Delta_jA_a\Psi_k$ satisfies
\begin{equation}\label{eq:kernel-estimate}
 |K_{j,k}(x,y)|
 \leq C_{N,L}\,2^{km}2^{-N|j-k|}
 2^{kd}(1+2^k\tdist(x,y))^{-L}
\end{equation}
for $j,k\geq0$.  The factors $2^{km}$ and $2^{kd}$ are interpreted as bounded
constants when $k=0$.
\end{lemma}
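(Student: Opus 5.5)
We compute the kernel explicitly by expanding the symbol in the $x$-variable, as in \eqref{eq:x-fourier}. For $f$ a trigonometric polynomial,
\[
 \Delta_jA_a\Psi_kf(x)=\sum_{\nu,n\in\Z}\varphi_j(n+\nu)a_\nu(n)\psi_k(n)\wh f(n)e^{i\ip{(n+\nu)}x}.
\]
Hence
\[
 K_{j,k}(x,y)=\sum_{\nu\in\Z}e^{i\ip\nu x}\sum_{n\in\Z}\varphi_j(n+\nu)\psi_k(n)a_\nu(n)e^{i\ip n{(x-y)}}.
\]
The plan treats the $\nu$-sum and the $n$-sum separately. For each fixed $\nu$, we bound the inner sum by the standard summation-by-parts argument on $\Z$. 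The resulting bound is then summed over $\nu$, using the rapid decay supplied by Lemma~\ref{lem:coefficients}.

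\emph{Step 1 (support and off-diagonal decay).} The inner sum is nonzero only if $|n|\sim2^k$ and $|n+\nu|\sim2^j$. When $j=0$ or $k=0$, the corresponding annulus is replaced by a ball. If $|j-k|\geq c_0$ for a suitable absolute constant $c_0$, then both supports are nonempty only when $|\nu|\gtrsim 2^{\max(j,k)}\geq 2^{|j-k|}$. In that case Lemma~\ref{lem:coefficients} with a large $L$ gives the factor
\[
 (1+|\nu|)^{-L}\lesssim 2^{-N|j-k|}(1+|\nu|)^{-L+N}.
\]
This yields the off-diagonal decay, and the remaining power of $(1+|\nu|)$ is still summable. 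When $|j-k|<c_0$, the factor $2^{-N|j-k|}$ is harmless.

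\emph{Step 2 (spatial decay, fixed $\nu$).} Write $x-y$ as a point of $[-\pi,\pi]^d$ and choose $i$ with $|(x-y)_i|\gtrsim \tdist(x,y)$. Set
\[
 g_\nu(n)=\varphi_j(n+\nu)\psi_k(n)a_\nu(n).
\]
Summing by parts $L$ times in $n_i$ turns $e^{i\ip n{(x-y)}}$ into a factor $(e^{i(x-y)_i}-1)^{-L}$, which is of size $\tdist(x,y)^{-L}$, and replaces $g_\nu$ by $\Delta_{n_i}^Lg_\nu$. We then apply the discrete Leibniz rule. Each difference falling on $\psi_k$ gains $2^{-k}$. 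Each difference falling on $a_\nu$ gains $\la n\ra^{-1}\sim2^{-k}$, with the factor $(1+|\nu|)^{-L'}$ retained, by Lemma~\ref{lem:coefficients}. Each difference falling on $\varphi_j(\cdot+\nu)$ gains $2^{-j}$; this is not $2^{-k}$, and we address it in Step 3. Since the sum has at most $O(2^{kd})$ terms and $|a_\nu(n)|\lesssim 2^{km}(1+|\nu|)^{-L'}$, we obtain
\[
 \Bigl|\sum_n\cdots\Bigr|\lesssim 2^{km}2^{kd}(1+|\nu|)^{-L'}\bigl(1+2^k\tdist(x,y)\bigr)^{-L}.
\]
This is obtained by combining the trivial bound with the $L$-fold summation by parts. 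Summing over $\nu$ with $L'>d+N+L$ then gives \eqref{eq:kernel-estimate}.

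\emph{Step 3 (main obstacle).} The main obstacle is the case $j<k$. There, differences of $\varphi_j$ gain only $2^{-j}$, so we would get the weaker decay $(1+2^j\tdist)^{-L}$ instead of $(1+2^k\tdist)^{-L}$. The remedy is that when $j<k-c_0$ we are in the off-diagonal regime of Step 1, where $|\nu|\gtrsim2^k$. Each lost factor $2^{k-j}$ can then be absorbed into $(1+|\nu|)^{-1}2^{k}\lesssim1$, at the cost of a few more powers from the rapid decay in $\nu$. For $|j-k|<c_0$ the two scales are comparable, so no loss occurs. When $j>k$, differences of $\varphi_j$ gain $2^{-j}\le2^{-k}$, so no loss occurs either. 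We treat the cases $k=0$ or $j=0$ in the same way, interpreting $2^{km}$ and $2^{kd}$ as constants, and tracking that $\psi_0$, $\varphi_0$ are supported in balls rather than annuli.
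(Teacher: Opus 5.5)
Your proof is correct and follows the paper's argument: expand the symbol in $x$, use the support observation $|\nu|\gtrsim 2^{\max(j,k)}$ off the diagonal together with Lemma~\ref{lem:coefficients}, and sum by parts in $n$ via the discrete Leibniz rule. There are two differences. The paper treats $|j-k|\le 3$ separately, by bounding the kernel $H_k$ of $A_a\Psi_k$ and convolving with the kernel of $\Delta_j$, whereas you run the $x$-Fourier expansion uniformly in $(j,k)$. You also make explicit how the $2^{k-j}$ loss from differences of $\varphi_j$ is absorbed into the $\nu$-decay when $j<k$, a point the paper leaves implicit.
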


\begin{proof}
We first suppress the output projection.  The kernel of $A_a\Psi_k$ is
\begin{equation}\label{eq:basic-kernel}
 H_k(x,y)=\sum_{n\in\Z}a(x,n)\psi_k(n)e^{i\ip n{(x-y)}}.
\end{equation}
On the support of $\psi_k$, the symbol estimates and the discrete Leibniz
formula imply, for every multiindex $\gamma$,
\begin{equation}\label{eq:difference-product}
 \left|\Delta_n^\gamma\{a(x,n)\psi_k(n)\}\right|
 \leq C_\gamma2^{k(m-|\gamma|)}.
\end{equation}
Indeed, every term in the discrete Leibniz formula contains a difference of
multiorder $\mu$ of $a$, bounded by $C2^{k(m-|\mu|)}$, and a difference of
multiorder $\gamma-\mu$ of $\psi_k$, bounded by
$C2^{-k(|\gamma|-|\mu|)}$.  The shifts in the discrete Leibniz
formula do not change these estimates.

For $t=x-y$, summation by parts gives
\[
 \prod_{j=1}^d(e^{-it_j}-1)^{\gamma_j}H_k(x,y)
 =\sum_n\Delta_n^\gamma\{a(x,n)\psi_k(n)\}e^{i\ip n t},
\]
up to irrelevant signs and integer shifts.  Applying this identity in a
coordinate direction in which $|t_j|\geq |t|/\sqrt d$, and iterating, gives
arbitrary decay in $2^k|t|$.  Since the sum contains $O(2^{kd})$ terms, the
trivial estimate is $|H_k(x,y)|\leq C2^{k(m+d)}$, and hence
\begin{equation}\label{eq:basic-localization}
 |H_k(x,y)|\leq C_L2^{k(m+d)}
 (1+2^k\tdist(x,y))^{-L}.
\end{equation}
The same argument after applying any number of $x$-derivatives gives
\begin{equation}\label{eq:xder-kernel}
 |\partial_x^\gamma H_k(x,y)|
 \leq C_{L,\gamma}2^{k(m+|\gamma|+d)}
 (1+2^k\tdist(x,y))^{-L}.
\end{equation}
Here differentiating $e^{i\ip n{(x-y)}}$ costs $O(2^k)$, while differentiating the
symbol is controlled by \eqref{eq:symbol-estimate}.

We now insert the output projection.  If $|j-k|\leq3$, convolution of
\eqref{eq:basic-localization} with the rapidly decreasing periodic kernel of
$\Delta_j$ gives \eqref{eq:kernel-estimate} without the off-diagonal factor.

For both off-diagonal cases we use the Fourier expansion
\eqref{eq:x-fourier} in \eqref{eq:basic-kernel}.  The output projection forces
$\ell=n+\nu$ to satisfy $|\ell|\asymp2^j$, whereas $|n|\asymp2^k$.
If $j\geq k+4$, then $|\nu|\geq c2^j$; if $k\geq j+4$, then
$|\nu|\geq c2^k$.  Lemma~\ref{lem:coefficients}, with arbitrarily large
$L_0$, therefore supplies respectively the factors $2^{-jL_0}$ and
$2^{-kL_0}$.  Now we apply the discrete Leibniz formula and summation by parts in
$n$ to obtain
\[
 \sum_{n,\nu}\varphi_j(n+\nu)a_\nu(n)\psi_k(n)
 e^{i\ip{(n+\nu)}x-i\ip n y},
\]
noting that each difference of $a_\nu(n)$ is covered by
Lemma~\ref{lem:coefficients}, while differences of the cutoffs cost
$2^{-j}$ or $2^{-k}$.  This gives, for arbitrary $M,L>0$,
\[
 |K_{j,k}(x,y)|
 \leq C2^{km}2^{-M\max(j,k)}
 2^{kd}(1+2^k\tdist(x,y))^{-L}.
\]
Since $\max(j,k)\geq|j-k|$, this is actually stronger than the required
$2^{-N|j-k|}$ after choosing $M>N$.  The finitely many cases involving
$j=0$ or $k=0$ are absorbed by changing the constant.  This completes the
proof.
\end{proof}

To turn this kernel bound into an $L^p$ estimate we derive a suitable estimate for
integration against such a kernel by a maximal function, which is the
content of the next lemma.

\begin{lemma}\label{lem:max-domination}
Suppose
\[
 |K_k(x,y)|\leq C2^{kd}(1+2^k\tdist(x,y))^{-L}
\]
where $0<r\leq1$ and $L>d/r$.  Suppose also that $\wh g$ is supported in
$\{|n|\leq C_02^k\}$.  Then
\[
 \int_{\T}|K_k(x,y)g(y)|\,\dd y
 \leq C_{L,r,C_0}\mathcal M_rg(x).
\]
\end{lemma}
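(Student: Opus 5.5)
The plan is to interpolate between the Peetre maximal function of $g$ and the Hardy--Littlewood maximal function of $|g|^r$, so that the kernel decay is only charged the exponent $d/r$ and not $d+d/r$. Fix $k$, write $\rho(y)=\tdist(x,y)$, put $\lambda:=d/r$, and define the Peetre maximal function
\[
 g^*_{k,\lambda}(x):=\sup_{y\in\T}\frac{|g(y)|}{(1+2^k\tdist(x,y))^{\lambda}}.
\]
This quantity is finite because $g$ is a trigonometric polynomial.

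\textbf{Step 1 (Peetre's inequality on $\T$).} I will show that $g^*_{k,\lambda}(x)\leq C\,\mathcal M_rg(x)$ with $C=C_{r,C_0}$, for every $g$ with $\wh g$ supported in $\{|n|\leq C_02^k\}$. I will follow Triebel's argument.

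First, choose a bump $\theta\in C_c^\infty(\mathbb R^d)$ with $\theta=1$ on $\{|\xi|\leq C_0\}$. Then $g=\theta(2^{-k}D)g$, and the periodic kernel of $\theta(2^{-k}D)$ obeys the same bounds as the kernels in Lemma~\ref{lem:kernel}, for all $L$. The same holds for the $x$-derivatives of that kernel, which gain $2^k$ per derivative. Hence $|\nabla g(z)|\leq C2^k g^*_{k,\lambda}(x)(1+2^k\tdist(x,z))^{\lambda}$.

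Second, apply the mean value theorem on a ball $B(y,\delta2^{-k})$, with $\delta\in(0,1]$ a free parameter. This gives
\[
 |g(y)|^r\leq C\delta^{-d}2^{kd}\int_{B(y,\delta2^{-k})}|g|^r
 +C\delta^r\bigl(g^*_{k,\lambda}(x)\bigr)^r(1+2^k\tdist(x,y))^{\lambda r}.
\]
The ball lies in $B(x,\tdist(x,y)+2^{-k})$, so the integral term is at most $C\delta^{-d}(1+2^k\tdist(x,y))^{d}M(|g|^r)(x)$.

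Third, divide by $(1+2^k\tdist(x,y))^{\lambda r}$. Since $\lambda r=d$, the first term is bounded by $C\delta^{-d}M(|g|^r)(x)$. Taking the supremum in $y$ then gives
\[
 (g^*_{k,\lambda})^r\leq C\delta^{-d}M(|g|^r)(x)+C\delta^r(g^*_{k,\lambda})^r .
\]
Choosing $\delta$ small and absorbing the last term, which is legitimate because $g^*_{k,\lambda}(x)<\infty$, proves Step 1. I expect this to be the main obstacle: one must check that the periodic kernel bounds are uniform in $k$, and that the geometry of balls on $\T$ causes no trouble at scales $2^{-k}\gtrsim1$. Those scales only involve finitely many $k$, which are absorbed into the constant.

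\textbf{Step 2 (splitting $|g|$).} For $r=1$ skip this step. For $r<1$, write $|g(y)|=|g(y)|^r|g(y)|^{1-r}$ and bound the second factor by $\bigl(g^*_{k,\lambda}(x)(1+2^k\rho(y))^{\lambda}\bigr)^{1-r}$. This yields
\[
 \int_{\T}|K_k(x,y)g(y)|\dd y\leq C\bigl(g^*_{k,\lambda}(x)\bigr)^{1-r}
 \int_{\T}2^{kd}(1+2^k\rho(y))^{-L+\lambda(1-r)}|g(y)|^r\dd y .
\]

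\textbf{Step 3 (the remaining integral).} The exponent is $L-\lambda(1-r)=L-d/r+d$. The hypothesis $L>d/r$ says exactly that this exponent exceeds $d$. So $2^{kd}(1+2^k\rho)^{-(L-d/r+d)}$ is a radially decreasing function of $\rho$ with $L^1(\T)$ norm bounded uniformly in $k$.

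I will decompose $\T$ into the ball $\{\rho\leq2^{-k}\}$ and the dyadic shells $\{2^{i-1-k}<\rho\leq2^{i-k}\}$ for $i\geq1$, stopping at $2^{i-k}\gtrsim1$. On the $i$-th shell the kernel is at most $C2^{kd}2^{-i(L-d/r+d)}$, while $\int_{\rho\leq2^{i-k}}|g|^r\leq C2^{(i-k)d}M(|g|^r)(x)$. Summing the geometric series, which converges because $L-d/r>0$, bounds the integral by $C\,M(|g|^r)(x)=C(\mathcal M_rg(x))^r$.

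Combining Steps 2 and 3 with Step 1 gives the bound $C(\mathcal M_rg)^{1-r}(\mathcal M_rg)^{r}=C\mathcal M_rg(x)$, which is the claim.
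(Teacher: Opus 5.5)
Your proof is correct, and it takes a genuinely different route from the paper's. You prove the sharp Peetre inequality $g^*_{k,d/r}(x)\leq C\mathcal M_rg(x)$ at the critical exponent $\lambda=d/r$, using Triebel's gradient bound, a mean-value step and absorption. You then conclude by splitting $|g|=|g|^r|g|^{1-r}$ around the fixed point $x$, so that $|g|^r$ only meets the integrable radial majorant $2^{kd}(1+2^k\tdist(x,y))^{-(L-d/r+d)}$.

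The paper never differentiates $g$. It proves the submean inequality \eqref{eq:submean} centred at each point $y$, using only the reproducing formula $g=E_k*g$ and a Peetre function $P_{k,N}$ with a large, non-critical exponent. It then derives the local integral estimate \eqref{eq:local-integral} by covering $B(x,2^{u-k})$ with cubes of side $2^{-k}$, applying $r$-subadditivity over the cubes and a lattice-sum bound. It finishes with an annular decomposition of the kernel.

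Your argument is shorter and yields a stronger pointwise statement. The paper's argument isolates \eqref{eq:local-integral}, which it reuses at radius $2^{-\rho k}$ in Lemma~\ref{lem:rho-dyadic}. Your Step~1 gives that estimate just as easily, since $\int_{\tdist(x,y)<2^{u-k}}|g(y)|\dd y\leq C\,(g^*_{k,d/r}(x))^{1-r}2^{ud(1-r)/r}2^{(u-k)d}M(|g|^r)(x)\leq C2^{ud/r-kd}\mathcal M_rg(x)$.

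Finally, the derivative argument can be avoided altogether. For $r<1$ you may run Steps~2--3 with any $\lambda\in\bigl(d/r,(L-d)/(1-r)\bigr)$, an interval that is nonempty precisely because $L>d/r$. For such $\lambda$ the Step~2 exponent $L-\lambda(1-r)$ still exceeds $d$. Moreover, the paper's reproducing-formula argument evaluated at $y=x$ already gives $g^*_{k,\lambda}(x)\leq C\mathcal M_rg(x)$, since $\lambda r>d$.
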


\begin{proof}
We first record the periodic Peetre--Fefferman--Stein estimate
\begin{equation}\label{eq:submean}
 |g(y)|
 \leq C_{R,r}
 \left(\int_{\T} 2^{kd}(1+2^k\tdist(y,z))^{-R}|g(z)|^r\dd z\right)^{1/r}.
\end{equation}
To prove it, choose a smooth multiplier $\eta_k(D)$ which equals one on the
spectrum of $g$.  Its kernel satisfies, for every $A>0$,
\[
 |E_k(u)|\leq C_A2^{kd}(1+2^k\tdist(u,0))^{-A},
\]
and $g=E_k*g$.    Fix \(N>0\) and define the Peetre maximal function
\[
 P_{k,N}g(y)
 :=
 \sup_{w\in\mathbb T^d}
 \frac{|g(w)|}
      {\bigl(1+2^k\operatorname{dist}_{\mathbb T^d}(y,w)\bigr)^N}.
\]
This quantity is pointwise finite because \(g\) is a trigonometric polynomial. From
the reproducing formula \(g=E_k*g\), we have
\[
 |g(w)|
 \leq
 \int_{\mathbb T^d}
 |E_k(w-z)|\,|g(z)|^r|g(z)|^{1-r}\,dz.
\]
Moreover,
\[
 |g(z)|^{1-r}
 \leq
 P_{k,N}g(y)^{1-r}
 \bigl(1+2^k\operatorname{dist}_{\mathbb T^d}(y,z)\bigr)^{N(1-r)}.
\]
Using
\[
 1+2^k\operatorname{dist}_{\mathbb T^d}(y,z)
 \leq
 \bigl(1+2^k\operatorname{dist}_{\mathbb T^d}(y,w)\bigr)
 \bigl(1+2^k\operatorname{dist}_{\mathbb T^d}(w,z)\bigr)
\]
and choosing the decay order \(A\) of \(E_k\) sufficiently large, we obtain
\begin{align*}
 \frac{|g(w)|}
      {\bigl(1+2^k\operatorname{dist}_{\mathbb T^d}(y,w)\bigr)^N}
 &\leq
 C\,P_{k,N}g(y)^{1-r}
 \int_{\mathbb T^d}
 2^{kd}
 \frac{|g(z)|^r}
      {\bigl(1+2^k\operatorname{dist}_{\mathbb T^d}(y,z)\bigr)^{Nr}}
 \,dz.
\end{align*}
Taking the supremum over \(w\in\mathbb T^d\) gives
\[
 P_{k,N}g(y)
 \leq
 C\,P_{k,N}g(y)^{1-r}
 \int_{\mathbb T^d}
 2^{kd}
 \frac{|g(z)|^r}
      {\bigl(1+2^k\operatorname{dist}_{\mathbb T^d}(y,z)\bigr)^{Nr}}
 \,dz.
\]
If \(P_{k,N}g(y)=0\), the desired estimate is immediate. Otherwise,
division by \(P_{k,N}g(y)^{1-r}\) yields
\[
 P_{k,N}g(y)^r
 \leq
 C
 \int_{\mathbb T^d}
 2^{kd}
 \frac{|g(z)|^r}
      {\bigl(1+2^k\operatorname{dist}_{\mathbb T^d}(y,z)\bigr)^{Nr}}
 \,dz.
\]
Since \(|g(y)|\leq P_{k,N}g(y)\), taking the \(r\)th root and writing
\(R=Nr\) proves \eqref{eq:submean}.

We next prove the  local integral estimate
\begin{equation}\label{eq:local-integral}
 \int_{\tdist(x,y)<2^{u-k}}|g(y)|\dd y
 \leq C2^{ud/r-kd}\mathcal M_rg(x),\qquad u\geq0.
\end{equation}
It suffices to consider those $u$ for which $2^{u-k}$ does not
exceed a fixed multiple of the diameter of $\mathbb T^d$; the remaining
cases follow after changing the constant.

Partition
\[
 B_u(x):=\bigl\{y\in\mathbb T^d:
              \operatorname{dist}_{\mathbb T^d}(x,y)<2^{u-k}\bigr\}
\]
into cubes $Q_v$ of side length comparable to $2^{-k}$, with uniformly
bounded overlap, and choose a point $y_v\in Q_v$. If $y\in Q_v$, then
the triangle inequality gives
\[
 1+2^k\operatorname{dist}_{\mathbb T^d}(y_v,z)
 \leq C\bigl(1+2^k\operatorname{dist}_{\mathbb T^d}(y,z)\bigr),
\]
and conversely. Hence \eqref{eq:submean}, with an exponent $R>d$, implies
\[
 \sup_{y\in Q_v}|g(y)|
 \leq C
 \left(
   2^{kd}\int_{\mathbb T^d}
   \frac{|g(z)|^r}
        {\bigl(1+2^k\operatorname{dist}_{\mathbb T^d}(y_v,z)\bigr)^R}
   \,dz
 \right)^{1/r}.
\]
Consequently,
\[
 \int_{Q_v}|g(y)|\,dy
 \leq C\,2^{-kd}
 \left(
   2^{kd}\int_{\mathbb T^d}
   \frac{|g(z)|^r}
        {\bigl(1+2^k\operatorname{dist}_{\mathbb T^d}(y_v,z)\bigr)^R}
   \,dz
 \right)^{1/r}.
\]
Since $0<r\leq 1$, the inequality
\[
 \left(\sum_v c_v\right)^r\leq\sum_v c_v^r
\]
therefore gives
\begin{align}
 \left(\int_{B_u(x)}|g(y)|\,dy\right)^r
 &\leq
 C\,2^{-kdr}2^{kd}
 \int_{\mathbb T^d}|g(z)|^r W_{u,k}(x,z)\,dz,
 \label{eq:local-integral-sum}
\end{align}
where
\[
 W_{u,k}(x,z)
 :=
 \sum_{v:\,Q_v\cap B_u(x)\neq\varnothing}
 \bigl(1+2^k\operatorname{dist}_{\mathbb T^d}(y_v,z)\bigr)^{-R}.
\]

Choose $R>d$ sufficiently large. A standard lattice-sum estimate gives,
for any prescribed $R_0>d$, after choosing $R>R_0+d$,
\begin{equation}\label{eq:lattice-sum-estimate}
 W_{u,k}(x,z)
 \leq
 C_{R_0}\,
 \bigl(1+2^{k-u}
       \operatorname{dist}_{\mathbb T^d}(x,z)\bigr)^{-R_0}.
\end{equation}
Indeed, when
$\operatorname{dist}_{\mathbb T^d}(x,z)\leq C2^{u-k}$, the sum is
bounded by the full lattice sum
$\sum_{\nu\in\mathbb Z^d}(1+|\nu|)^{-R}$.
Outside this enlarged ball, the distance from $z$ to every $y_v$ is
comparable to $\operatorname{dist}_{\mathbb T^d}(x,z)$, and summation
over the $O(2^{ud})$ cubes yields \eqref{eq:lattice-sum-estimate}, provided
$R$ is chosen sufficiently large.

Decomposing $\mathbb T^d$ into annuli centred at $x$, estimate
\eqref{eq:lattice-sum-estimate} and the definition of the Hardy--Littlewood
maximal function imply
\begin{align}
 \int_{\mathbb T^d}
 |g(z)|^r W_{u,k}(x,z)\,dz
 &\leq
 C\,2^{(u-k)d}M(|g|^r)(x).
 \label{eq:weighted-maximal-estimate}
\end{align}
Indeed, on the annulus
\[
 2^{h+u-k}\leq
 \operatorname{dist}_{\mathbb T^d}(x,z)
 <2^{h+1+u-k},
\]
the weight in \eqref{eq:lattice-sum-estimate} is $O(2^{-hR_0})$, whereas
the integral of
$|g|^r$ is at most
\[
 C\,2^{(h+u-k)d}M(|g|^r)(x).
\]
The resulting series converges due to the fact that $R_0>d$.

Combining \eqref{eq:local-integral-sum} and
\eqref{eq:weighted-maximal-estimate}, we obtain
\[
 \left(\int_{B_u(x)}|g(y)|\,dy\right)^r
 \leq
 C\,2^{-kdr}2^{kd}2^{(u-k)d}M(|g|^r)(x)
 =
 C\,2^{ud-kdr}M_rg(x)^r.
\]
Taking the $r$th root proves \eqref{eq:local-integral}.

Finally divide $\T$ into $E_0=\{\tdist(x,y)<2^{-k}\}$ and
\[
 E_h=\{2^{h-1-k}\leq\tdist(x,y)<2^{h-k}\},\qquad h\geq1,
\]
stopping when the outer radius exceeds the diameter of $\T$.  On $E_h$ the
kernel is at most $C2^{kd}2^{-hL}$.  Estimate
\eqref{eq:local-integral} gives
\[
 \int_{E_h}|g(y)|\dd y
 \leq C2^{hd/r-kd}\mathcal M_rg(x).
\]
Thus the
$h$th contribution is bounded by
$C2^{-h(L-d/r)}\mathcal M_rg(x)$.  Summation in $h$ then proves the claim.
\end{proof}

Combining Lemmas~\ref{lem:kernel} and \ref{lem:max-domination} gives the
pointwise almost-orthogonality estimate
\begin{equation}\label{eq:pointwise-AO}
 |\Delta_jA_a\Psi_kg(x)|
 \leq C_{N,r}2^{km}2^{-N|j-k|}\mathcal M_rg(x),
\end{equation}
provided the spatial decay exponent in Lemma~\ref{lem:kernel} is chosen
larger than $d/r$.

These pointwise estimates are exactly what is needed to apply a discrete
Littlewood--Paley argument, which we consider next.

\subsection{Ambient boundedness}\label{sec:ambient}
We have the following boundedness result.
\begin{proposition}\label{prop:ambient}
Let $a\in S^m_{1,0}(\T\times\Z)$ and $s\in\mathbb R$.
Then
\begin{equation}\label{eq:ambient-B}
 A_a:B^{s+m}_{p,q}(\T)\longrightarrow B^s_{p,q}(\T)
\end{equation}
for $0<p,q\leq\infty$, and
\begin{equation}\label{eq:ambient-F}
 A_a:F^{s+m}_{p,q}(\T)\longrightarrow F^s_{p,q}(\T)
\end{equation}
for $0<p<\infty$ and $0<q\leq\infty$.
\end{proposition}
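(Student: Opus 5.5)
We will derive both estimates from the pointwise almost-orthogonality bound \eqref{eq:pointwise-AO}, applied with $g=\Delta_kf$. Write $f=\sum_{k\ge0}\Psi_k\Delta_kf$, so that $\Delta_jA_af=\sum_k\Delta_jA_a\Psi_k\Delta_kf$. The sum converges in $\cD'(\T)$ because $A_a$ is continuous on trigonometric polynomials and, by Lemma~\ref{lem:kernel}, the off-diagonal terms decay rapidly. Since $\Delta_kf$ has spectrum in $\{|n|\le C2^k\}$, Lemmas~\ref{lem:kernel} and \ref{lem:max-domination} give, for any fixed $0<r\le1$ and any $N$,
\[
 2^{js}|\Delta_jA_af(x)|\le C\sum_{k\ge0}2^{-(N-|s|)|j-k|}\,2^{k(s+m)}\mathcal M_r(\Delta_kf)(x).
\]
Here we used $2^{js}=2^{ks}2^{(j-k)s}\le2^{ks}2^{|s||j-k|}$. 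We choose $N>|s|+1$ so that the convolution weight $\epsilon_{i}:=2^{-(N-|s|)|i|}$ is summable. We also take the spatial decay exponent $L$ in Lemma~\ref{lem:kernel} larger than $d/r$, which is allowed because $L$ is arbitrary there.

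For the $F$-scale, choose $r<\min(p,q,1)$. Take the $\ell^q_j$ quasi-norm pointwise in $x$. For $q\le1$ use the $q$-triangle inequality; for $q>1$ use Young's inequality on $\mathbb Z$. Either way the convolution with $\epsilon$ is bounded on $\ell^q$, and we obtain
\[
 \norm{\{2^{js}\Delta_jA_af(x)\}_j}_{\ell^q}\le C\norm{\{2^{k(s+m)}\mathcal M_r(\Delta_kf)(x)\}_k}_{\ell^q}.
\]
Next take $L^p$ norms. Since $\mathcal M_rg=(M|g|^r)^{1/r}$, the Fefferman--Stein vector-valued maximal inequality for $M$ on $L^{p/r}(\ell^{q/r})$ applies, with $p/r>1$ and $q/r>1$. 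This inequality holds on $\T$, which is a space of homogeneous type. It yields $\norm{A_af}_{F^s_{p,q}}\le C\norm f_{F^{s+m}_{p,q}}$. The case $q=\infty$ is included, since $M$ is bounded on $L^{p/r}(\ell^\infty)$ when $p/r>1$.

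For the $B$-scale, choose $r<\min(p,1)$ and first take the $L^p$ norm in $x$ for each fixed $j$. When $p<1$ use the $p$-triangle inequality, and when $p\ge1$ use Minkowski's inequality. Then the scalar bound $\norm{\mathcal M_rg}_{L^p}\le C\norm g_{L^p}$, valid for $p/r>1$ and including $p=\infty$, gives
\[
 2^{js}\norm{\Delta_jA_af}_{L^p}\le C\Bigl(\sum_k\epsilon_{j-k}^{\min(p,1)}\bigl(2^{k(s+m)}\norm{\Delta_kf}_{L^p}\bigr)^{\min(p,1)}\Bigr)^{1/\min(p,1)}.
\]
The $\ell^q_j$ bound then follows from the same discrete convolution argument as in the $F$ case.

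The main technical point, and the only genuine obstacle, is the density and convergence issue for general $f\in B^{s+m}_{p,q}$ or $F^{s+m}_{p,q}$, especially when $q=\infty$ or $p=\infty$. Trigonometric polynomials are not dense there. We therefore define $A_af$ via its distributional action, which is legitimate because $A_a$ maps $\cD'(\T)$ to itself; this follows from the symbol estimates and the Fourier-coefficient bound of Lemma~\ref{lem:coefficients}. We then verify the estimate on the partial sums $f_K=\sum_{k\le K}\Psi_k\Delta_kf$, whose norms are uniformly controlled by $\norm f$. Finally, we pass to the limit with a Fatou argument: $\Delta_jA_af_K\to\Delta_jA_af$ pointwise, because each $\Delta_jA_af_K$ is a finite-dimensional object converging coefficientwise, with rapid decay in $K$ coming from the off-diagonal factor.
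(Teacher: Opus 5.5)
Your proof is correct and follows the paper's argument essentially step for step. Both use the decomposition $\Delta_jA_af=\sum_k\Delta_jA_a\Psi_k\Delta_kf$, the pointwise bound \eqref{eq:pointwise-AO} with $N>|s|+1$ and $L>d/r$, a discrete convolution estimate in the scale index, and then the scalar maximal theorem (Besov) or the Fefferman--Stein inequality with $r<\min(p,q,1)$ (Triebel--Lizorkin, including $q=\infty$). Your final paragraph, defining $A_af$ distributionally and passing to the limit from $f_K$ by Fatou, justifies the convergence of the $k$-series, which the paper leaves implicit; it is a sound addition.
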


\begin{proof}
Using a second resolution of unity, write
\[
 \Delta_jA_af=\sum_{k\geq0}\Delta_jA_a\Psi_k\Delta_kf.
\]
By \eqref{eq:pointwise-AO},
\begin{align}
 2^{js}|\Delta_jA_af(x)|
 &\leq C\sum_{k\geq0}2^{js}2^{km}2^{-N|j-k|}
       \mathcal M_r(\Delta_kf)(x)\notag\\
 &\leq C\sum_{k\geq0}2^{-(N-|s|)|j-k|}
       2^{k(s+m)}\mathcal M_r(\Delta_kf)(x).
 \label{eq:weighted-AO}
\end{align}
Choose $N>|s|+1$.

For the Besov estimate, choose $0<r<p$ when $p<\infty$.  Since $p/r>1$,
the Hardy--Littlewood maximal theorem gives
\[
 \norm{\mathcal M_rg}_{L^p}\leq C_{p,r}\norm g_{L^p}.
\]
For $p=\infty$ the same estimate is immediate.  Take the $L^p$ quasi-norm in
\eqref{eq:weighted-AO} and put
\[
 \eta:=\min(1,p,q),\qquad
 d_\ell:=2^{-\eta(N-|s|)|\ell|},\qquad
 c_k:=2^{k(s+m)}\norm{\Delta_kf}_{L^p}.
\]
The $p$-quasi-triangle inequality and
$(\sum c_k)^\eta\leq\sum c_k^\eta$ give
\[
 \bigl(2^{js}\norm{\Delta_jA_af}_{L^p}\bigr)^\eta
 \leq C\sum_k d_{j-k}c_k^\eta.
\]
Here the same formula covers $p\geq1$, after possibly decreasing $\eta$.
Since $q/\eta\geq1$ and $d\in\ell^1$, Young's inequality on
$\ell^{q/\eta}$ yields
\[
 \norm{\{(d*c^\eta)_j^{1/\eta}\}_j}_{\ell^q}
 \leq \norm d_{\ell^1}^{1/\eta}\norm c_{\ell^q}.
\]
This is the required powered discrete convolution estimate, and it yields
\[
 \norm{A_af}_{B^s_{p,q}}
 \leq C\norm f_{B^{s+m}_{p,q}}.
\]

For the Triebel--Lizorkin estimate first suppose $q<\infty$ and choose
$0<r<\min(p,q,1)$.  The vector-valued maximal inequality applied with
exponents $p/r>1$ and $q/r>1$ gives
\begin{equation}\label{eq:FS}
 \norm{\{\mathcal M_rg_k\}_k}_{L^p(\ell^q)}
 \leq C\norm{\{g_k\}_k}_{L^p(\ell^q)}.
\end{equation}
Applying the discrete convolution estimate in the $j$ index to
\eqref{eq:weighted-AO}, followed by \eqref{eq:FS}, gives
\[
 \norm{A_af}_{F^s_{p,q}}
 \leq C\norm f_{F^{s+m}_{p,q}}.
\]
When $q=\infty$, use the elementary estimate
\[
 \sup_k\mathcal M_rg_k
 \leq\mathcal M_r\left(\sup_k|g_k|\right)
\]
and choose $0<r<p$.  This proves \eqref{eq:ambient-F} in the remaining case.
\end{proof}

\begin{remark}
The proof uses only finitely many symbol seminorms for each fixed collection
of parameters.  More precisely, choose $0<r<\min(p,q,1)$ in the
Triebel--Lizorkin case and $0<r<p$ in the Besov case, followed by
$N>|s|+1$ and $L>d/r$.  Lemma~\ref{lem:kernel} then requires only finitely
many $x$-derivatives and $n$-differences, with their orders depending on
$N$ and $L$.  Tracking the orders used in the Taylor expansion and
summation by parts yields an explicit finite-regularity version of the
result.
\end{remark}

\section{From periodic to holomorphic spaces}\label{sec:main}

It remains to combine the ambient boundedness theorem with a description of
which symbols respect the positive-frequency subspace.  We characterize
this compatibility below and then prove the main holomorphic
theorem in Section~\ref{sec:holomorphic}.

\subsection{Positive-frequency invariance}\label{sec:invariance}

\begin{proposition}\label{prop:invariance}
Let $a\in S^m_{1,0}(\T\times\Z)$.  Then
\[
 A_a:\cD'_+(\T)\longrightarrow\cD'_+(\T)
\]
is continuous if and only if $a$ satisfies the triangular condition
\eqref{eq:triangular}.
\end{proposition}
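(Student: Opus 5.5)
The plan is to compute the Fourier coefficients of $A_af$ directly in terms of the $x$-Fourier expansion \eqref{eq:x-fourier}, and then read off both directions of the equivalence. For a trigonometric polynomial $f$, inserting \eqref{eq:x-fourier} into \eqref{eq:quantization} gives
\[
 A_af(x)=\sum_{n\in\Z}\sum_{\nu\in\Z}a_\nu(n)\wh f(n)e^{i\ip{(n+\nu)}x},
 \qquad\text{so}\qquad
 \wh{A_af}(\ell)=\sum_{n\in\Z}a_{\ell-n}(n)\wh f(n).
\]
The sum converges absolutely because $f$ has finitely many nonzero coefficients. For a general $f\in\cD'(\T)$, the same formula holds, and the series still converges absolutely. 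Indeed, $|\wh f(n)|\leq C\la n\ra^M$ for some $M$, while Lemma~\ref{lem:coefficients} gives $|a_{\ell-n}(n)|\leq C_L(1+|\ell-n|)^{-L}\la n\ra^m$. Using Peetre's inequality $\la n\ra^{M+m}\leq C\la\ell\ra^{|M+m|}(1+|\ell-n|)^{|M+m|}$ and taking $L$ large, we get $|\wh{A_af}(\ell)|\leq C\la\ell\ra^{M'}$. This also shows that $A_a$ maps $\cD'(\T)$ continuously to itself. I will justify the distributional identity by density of trigonometric polynomials in $\cD'(\T)$, together with this uniform polynomial-growth bound, or equivalently by testing against $e^{-i\ip\ell x}$ and using the transpose, which is again a toroidal operator with rapidly decaying coefficient structure.

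For sufficiency, assume \eqref{eq:triangular} and let $f\in\cD'_+(\T)$. Take $\ell\notin\Zp$. Every $n$ with $\wh f(n)\neq0$ lies in $\Zp$, and for such $n$ we have $n+(\ell-n)=\ell\notin\Zp$. Hence $a_{\ell-n}(n)=0$, so $\wh{A_af}(\ell)=0$. Thus $A_a$ maps $\cD'_+(\T)$ into itself. Continuity, in the topology inherited from $\cD'(\T)$, is then the restriction of the ambient continuity established above.

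For necessity, assume $A_a$ preserves $\cD'_+(\T)$. Fix $n\in\Zp$ and apply $A_a$ to $f=e^{i\ip nx}\in\cD'_+(\T)$. Then
\[
 A_af(x)=a(x,n)e^{i\ip nx}=\sum_\nu a_\nu(n)e^{i\ip{(n+\nu)}x},
\]
so $\wh{A_af}(n+\nu)=a_\nu(n)$. Invariance forces this coefficient to vanish whenever $n+\nu\notin\Zp$, which is exactly \eqref{eq:triangular}. Continuity is not needed for this direction.

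The only step that needs real care is the distributional justification of the coefficient formula for general $f\in\cD'(\T)$: the interchange of the sums over $n$ and $\nu$, together with the polynomial-growth bookkeeping. This is handled by Lemma~\ref{lem:coefficients} with $L$ chosen larger than $|M+m|+d+1$. Everything else is a direct reading of the formula for $\wh{A_af}(\ell)$.
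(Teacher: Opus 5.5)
Your proposal is correct and follows essentially the same route as the paper: the coefficient formula $\wh{A_af}(\ell)=\sum_n a_{\ell-n}(n)\wh f(n)$, then Lemma~\ref{lem:coefficients} with a Peetre-type inequality to get polynomial growth and continuity, the triangular condition for sufficiency, and testing on $e^{i\ip n x}$ for necessity. The only cosmetic difference is that you prove continuity on all of $\cD'(\T)$ and then restrict, whereas the paper works directly on $\cD'_+(\T)$.
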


\begin{proof}
For a trigonometric polynomial $f$ with spectrum in $\Zp$,
\begin{align}
 A_af(x)
 &=\sum_{n\in\Zp}\sum_{\nu\in\Z}
   a_\nu(n)\wh f(n)e^{i\ip{(n+\nu)}x},\label{eq:expanded-action}
\end{align}
and hence
\begin{equation}\label{eq:output-coefficient}
 \wh{A_af}(\ell)
 =\sum_{n\in\Zp}a_{\ell-n}(n)\wh f(n).
\end{equation}
If $\ell\notin\Zp$, then $n+(\ell-n)=\ell\notin\Zp$ for every
$n\in\Zp$.
The triangular condition therefore makes every term in
\eqref{eq:output-coefficient} vanish.

The calculation extends from polynomials to $\cD'_+$.  Indeed, for some
$M\geq0$,
\[
 |\wh f(n)|\leq C\la n\ra^M,\qquad n\in\Zp.
\]
Fix $\ell\notin\Zp$.  Since $|\ell-n|\asymp1+|n|$ as $|n|\to\infty$,
Lemma~\ref{lem:coefficients} gives, for every $L$,
\[
 |a_{\ell-n}(n)\wh f(n)|
 \leq C_{\ell,L}\la n\ra^{m+M-L}.
\]
For fixed $\ell$, choosing $L>m+M+d$ makes the series in
\eqref{eq:output-coefficient} absolutely convergent.  To control the output
uniformly in $\ell$, put $\gamma=m+M$ and use
$\la n\ra^\gamma\leq C\la\ell\ra^{\gamma_+}
(1+|\ell-n|)^{|\gamma|}$, where $\gamma_+=\max(\gamma,0)$.  Lemma~\ref{lem:coefficients}
then gives, for $L>|\gamma|+d$,
\[
 |\wh{A_af}(\ell)|
 \leq C\la\ell\ra^{\gamma_+}
 \sum_{n\in\Zp}(1+|\ell-n|)^{-L+|\gamma|}
 \leq C'\la\ell\ra^{\gamma_+}.
\]
The same estimate is uniform on bounded subsets of $\cD'_+(\T)$, for
which $M$ and the coefficient-growth constant may be chosen uniformly;
hence $A_a$ is continuous on $\cD'_+(\T)$.  Thus
\eqref{eq:output-coefficient} defines a distribution.  If
$\ell\notin\Zp$, every summand is zero by triangularity, so
$\wh{A_af}(\ell)=0$.

Conversely, suppose $A_a$ maps $\cD'_+$ continuously into itself.  Fix $n\in\Zp$ and apply
$A_a$ to $e^{i\ip n x}$.  Formula \eqref{eq:expanded-action} becomes
\[
 A_a(e^{i\ip n x})=\sum_{\nu\in\Z}a_\nu(n)e^{i\ip{(n+\nu)}x}.
\]
Preservation of positive spectrum forces $a_\nu(n)=0$ whenever
$n+\nu\notin\Zp$.
This is exactly \eqref{eq:triangular}.
\end{proof}

\begin{remark}\label{rem:sym}
The stronger condition $a_\nu(n)=0$ for every $\nu\notin\Zp$ and
$n\in\Zp$ says
that $a(\cdot,n)$ is the boundary value of a holomorphic function.  It is
sufficient but not necessary.  For example, when $d\geq2$, the symbol
\[
 a(x,n)=e^{i(x_1-x_2)}\frac{n_2}{\la n\ra}
\]
belongs to $S^0_{1,0}(\T\times\Z)$.  On the positive cone it vanishes when
$n_2=0$, and otherwise shifts the frequency by $e_1-e_2$; hence it satisfies
\eqref{eq:triangular}.  It is not analytic in $x$ and genuinely couples two
coordinates.  The analogous one-coordinate example is
$e^{-ix_1}n_1/\la n\ra$.
\end{remark}

Together with the ambient boundedness theorem of Section~\ref{sec:ambient}, this
characterization immediately yields boundedness on the holomorphic scales.

\subsection{The holomorphic mapping theorem}\label{sec:holomorphic}

Throughout this paper, $\cB^s_{p,q}(\D)$ and $\cF^s_{p,q}(\D)$ denote, by
definition, the holomorphic functions whose distributional boundary values
on the distinguished boundary belong to $B^s_{p,q,+}(\T)$ and
$F^s_{p,q,+}(\T)$, respectively.  Thus the boundary-value map gives the
canonical identifications (isometric for the fixed Littlewood--Paley
quasi-norms used here),
\begin{equation}\label{eq:boundary-identification}
 \cB^s_{p,q}(\D)\simeq B^s_{p,q,+}(\T),\qquad
 \cF^s_{p,q}(\D)\simeq F^s_{p,q,+}(\T).
\end{equation}
Conversely, every distribution in $\cD'_+(\T)$ has polynomially growing
Fourier coefficients, so its positive-frequency Fourier series converges
normally on compact subsets of $\D$ and defines a holomorphic function
with the prescribed distributional boundary value.
Indeed, if
$f(z)=\sum_{n\in\Zp}c_nz^n$, its boundary distribution has coefficients
$\wh f(n)=c_n$ for $n\in\Zp$ and zero coefficients otherwise, and the dyadic
expressions defining the two sides of \eqref{eq:boundary-identification} are
the same.  In dimension one these definitions agree with the established
boundary-value spaces discussed in the Introduction.

\begin{theorem}
\label{thm:main}
Let $a\in S^m_{1,0,\triangle}(\T\times\Z)$ and $s,m\in\mathbb R$.
Then:
\begin{enumerate}[label=\textup{(\roman*)}]
\item for $0<p,q\leq\infty$,
\[
 A_a:B^{s+m}_{p,q,+}(\T)\longrightarrow B^s_{p,q,+}(\T)
\]
is bounded;
\item for $0<p<\infty$ and $0<q\leq\infty$,
\[
 A_a:F^{s+m}_{p,q,+}(\T)\longrightarrow F^s_{p,q,+}(\T)
\]
is bounded.
\end{enumerate}
Consequently, through \eqref{eq:boundary-identification}, $A_a$ defines
bounded maps
\[
 A_a:\cB^{s+m}_{p,q}(\D)\longrightarrow\cB^s_{p,q}(\D)
\]
and
\[
 A_a:\cF^{s+m}_{p,q}(\D)\longrightarrow\cF^s_{p,q}(\D)
\]
in the same parameter ranges.
\end{theorem}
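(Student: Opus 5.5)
The plan is to combine Proposition~\ref{prop:ambient} with Proposition~\ref{prop:invariance}; no new estimates should be needed. Fix $f\in B^{s+m}_{p,q,+}(\T)$ (resp.\ $F^{s+m}_{p,q,+}(\T)$). By definition $f\in B^{s+m}_{p,q}(\T)$ and $f\in\cD'_+(\T)$. Proposition~\ref{prop:ambient} gives $A_af\in B^s_{p,q}(\T)$ with
\[
 \norm{A_af}_{B^s_{p,q}(\T)}\leq C\norm f_{B^{s+m}_{p,q}(\T)},
\]
and the analogous bound holds in the $F$ case for $0<p<\infty$. The ambient quasi-norms restricted to the positive cone are exactly the quasi-norms of $B^s_{p,q,+}$ and $F^s_{p,q,+}$. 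It therefore only remains to check that $A_af\in\cD'_+(\T)$.

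For the membership $A_af\in\cD'_+(\T)$ we use the triangular condition \eqref{eq:triangular} and Proposition~\ref{prop:invariance}. The main point to verify is consistency: the operator $A_af$ from Proposition~\ref{prop:ambient} should coincide, as a periodic distribution, with the one defined by the coefficient formula \eqref{eq:output-coefficient} in the proof of Proposition~\ref{prop:invariance}.

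\begin{itemize}
\item Every $f\in B^{s+m}_{p,q}(\T)$ or $F^{s+m}_{p,q}(\T)$ is a periodic distribution, so its Fourier coefficients grow at most polynomially.
\item Consequently the Fourier partial sums $f_N=\sum_{|n|\leq N}\wh f(n)e^{i\ip nx}$ converge to $f$ in $\cD'(\T)$. If $f\in\cD'_+$, then each $f_N$ is a positive-frequency trigonometric polynomial.
\item For trigonometric polynomials, both definitions of $A_af_N$ agree trivially. For each fixed $\ell$, the Fourier coefficient $\wh{A_af_N}(\ell)$ converges to the absolutely convergent series \eqref{eq:output-coefficient}. The dominated tail bound for this convergence comes from Lemma~\ref{lem:coefficients}, exactly as in the proof of Proposition~\ref{prop:invariance}.
\end{itemize}

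A distribution is determined by its Fourier coefficients. The dyadic definition of $A_af$ in Proposition~\ref{prop:ambient} is the sum $\sum_k A_a\Psi_k\Delta_kf$ of finite-frequency pieces, which converges in $\cD'$; its coefficients are therefore also given by \eqref{eq:output-coefficient}. Hence $\wh{A_af}(\ell)=0$ for $\ell\notin\Zp$, by triangularity.

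I expect this identification step to be the only delicate point. This is especially so in the quasi-Banach range, and when $q=\infty$ or $p=\infty$, where trigonometric polynomials are not dense and $A_af$ cannot simply be defined by extension from a dense class. The way around this is to avoid density entirely and argue coefficientwise as above. The only inputs are the polynomial coefficient bound for $f$ and the rapid off-diagonal decay of $a_\nu(n)$, both of which are available for all parameters.

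Finally, the holomorphic statements follow by transporting (i) and (ii) through the isometric identifications \eqref{eq:boundary-identification}. Concretely, a holomorphic $F(z)=\sum_{n\in\Zp}c_nz^n$ in $\cB^{s+m}_{p,q}(\D)$ is mapped to the holomorphic extension of $A_a$ applied to its boundary value. This extension exists because $A_a$ of the boundary value lies in $\cD'_+(\T)$, and its norm bound is inherited with the same constant.
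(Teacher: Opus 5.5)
Your proposal is correct and follows the paper's proof: the norm bound comes from Proposition~\ref{prop:ambient}, membership in $\cD'_+(\T)$ comes from the sufficiency half of Proposition~\ref{prop:invariance}, and the holomorphic statements come from \eqref{eq:boundary-identification}. Your coefficientwise check that the dyadically defined $A_af$ agrees with the series \eqref{eq:output-coefficient} is a consistency detail the paper leaves implicit, and you handle it correctly.
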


\begin{proof}
Let $f\in B^{s+m}_{p,q,+}(\T)$.  Proposition~\ref{prop:ambient} gives
\[
 \norm{A_af}_{B^s_{p,q}(\T)}
 \leq C\norm f_{B^{s+m}_{p,q}(\T)}.
\]
Proposition~\ref{prop:invariance} gives $A_af\in\cD'_+(\T)$.  Hence
$A_af\in B^s_{p,q,+}(\T)$, with the asserted bound.  The proof for
Triebel--Lizorkin spaces is identical.  Finally apply the boundary
identifications \eqref{eq:boundary-identification}.
\end{proof}

\begin{corollary}\label{cor:analytic}
If $a\in S^m_{1,0}(\T\times\Z)$ and $a(\cdot,n)$ has Fourier spectrum in
$\Zp$ for every $n\in\Zp$, then all conclusions of
Theorem~\ref{thm:main} hold.
\end{corollary}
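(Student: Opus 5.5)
The plan is to reduce Corollary~\ref{cor:analytic} to Theorem~\ref{thm:main} by showing that the hypothesis forces $a\in S^m_{1,0,\triangle}(\T\times\Z)$. Once that is established, Theorem~\ref{thm:main} applies verbatim and yields both the Besov and Triebel--Lizorkin bounds on the positive-frequency scales. The holomorphic statements on $\D$ then follow from the identifications \eqref{eq:boundary-identification}.

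The verification of \eqref{eq:triangular} proceeds in three steps.

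\begin{enumerate}[label=\textup{(\arabic*)}]
\item Fix $n\in\Zp$. The function $a(\cdot,n)$ is smooth, so its Fourier coefficients are exactly the $a_\nu(n)$ in \eqref{eq:x-fourier}. The hypothesis that $a(\cdot,n)$ has spectrum in $\Zp$ therefore says $a_\nu(n)=0$ whenever $\nu\notin\Zp$.
\item Suppose now that $n\in\Zp$ and $n+\nu\notin\Zp$. Then some coordinate satisfies $n_j+\nu_j<0$. Since $n_j\geq0$, this forces $\nu_j<0$, so $\nu\notin\Zp$.
\item By step (1), $a_\nu(n)=0$ for such $\nu$. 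This is precisely \eqref{eq:triangular}, so $a$ is positive triangular.
\end{enumerate}

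In other words, $\Zp+\Zp\subset\Zp$ makes the holomorphic condition a special case of triangularity, as already noted in Remark~\ref{rem:sym}.

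No step is a real obstacle. The only point requiring care is step (1): the notion of ``Fourier spectrum'' must match the coefficients $a_\nu(n)$ of \eqref{eq:x-fourier}. This is immediate because $a(\cdot,n)\in C^\infty(\T)$, and Lemma~\ref{lem:coefficients} guarantees its Fourier series converges absolutely.
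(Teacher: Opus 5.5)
Your proposal is correct and follows the paper's own proof: the paper likewise notes that the hypothesis gives $a_\nu(n)=0$ for $\nu\notin\Zp$, which is stronger than \eqref{eq:triangular}, and then applies Theorem~\ref{thm:main}. You additionally spell out the elementary step that $n\in\Zp$ and $n+\nu\notin\Zp$ force $\nu\notin\Zp$, which the paper leaves implicit.
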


\begin{proof}
The assumption implies $a_\nu(n)=0$ for $\nu\notin\Zp$, which is stronger than
\eqref{eq:triangular}.  Now we simply apply Theorem~\ref{thm:main}.
\end{proof}

\section{Extensions to the \texorpdfstring{$S^m_{\rho,\delta}$}{S(rho,delta)} symbol classes}\label{sec:extensions}

We now extend the preceding argument to the general toroidal
$S^m_{\rho,\delta}$ classes.  The result obtained by the maximal-function
method is uniform over the quasi-Banach range, but it is not asserted to have
the sharp loss when $\rho<1$.

\begin{definition}\label{def:rho-delta}
Let $m\in\mathbb R$ and $0\leq\delta<\rho\leq1$.  The class
$S^m_{\rho,\delta}(\T\times\Z)$ consists of all symbols, smooth in $x$, such
that
\begin{equation}\label{eq:rho-delta-symbol}
 |\Delta_n^\alpha\partial_x^\beta a(x,n)|
 \leq C_{\alpha,\beta}\la n\ra^{m-\rho|\alpha|+\delta|\beta|}
\end{equation}
for every $\alpha,\beta\in\mathbb N_0^d$.  We write
$S^m_{\rho,\delta,\triangle}$ when, in addition,
\begin{equation}\label{eq:rho-triangular}
 a_\nu(n)=0\quad\text{for }n\in\Zp\text{ and }n+\nu\notin\Zp.
\end{equation}
\end{definition}

We use the conventional notation $S^m_{\rho,\delta}(\mathbb R^d\times
\mathbb R^d)$ for the Euclidean symbols $c(x,\xi)$ satisfying
\[
 |\partial_\xi^\alpha\partial_x^\beta c(x,\xi)|
 \leq C_{\alpha,\beta}\la\xi\ra^{m-\rho|\alpha|+\delta|\beta|},
\]
and $\Op S^m_{\rho,\delta}$ for the corresponding Kohn--Nirenberg
operators.

The restriction $\delta<\rho$ is the standard range for the
Ruzhansky--Turunen symbolic calculus.  It is also the range used below to
reduce localized amplitudes to Euclidean symbols.  The exclusion of
$\delta=1$ when $\rho=1$ is substantive: general symbols of type $(1,1)$,
even of order zero, need not define bounded operators on $L^2$, as shown by
Ching's classical counterexample \cite{Ching1972}.  Additional restrictions,
such as twisted-diagonal hypotheses, are therefore needed for a comparable
$S^m_{1,1}$ theory.

\begin{proposition}\label{prop:triangular-composition}
Let $0\leq\delta<\rho\leq1$ and
$a_j\in S^{m_j}_{\rho,\delta,\triangle}(\T\times\Z)$, $j=1,2$.
Then the toroidal symbol $c$ of $A_{a_1}A_{a_2}$ belongs to
$S^{m_1+m_2}_{\rho,\delta,\triangle}(\T\times\Z)$.
\end{proposition}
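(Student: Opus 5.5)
The statement has two parts: membership of $c$ in $S^{m_1+m_2}_{\rho,\delta}(\T\times\Z)$, and the triangular property \eqref{eq:rho-triangular}. They are of different natures and I will treat them separately.

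\emph{Symbol class.} For the first part I would invoke the Ruzhansky--Turunen composition theorem for toroidal symbols \cite{RT2010,RTbook}. It holds in the range $0\leq\delta<\rho\leq1$ and gives $c\in S^{m_1+m_2}_{\rho,\delta}(\T\times\Z)$, together with the asymptotic expansion
\[
 c\sim\sum_\alpha\frac1{\alpha!}\Delta_n^\alpha a_1\,D_x^{(\alpha)}a_2 .
\]
Only the membership statement is needed here, not the expansion. For a self-contained alternative, start from the exact formula
\[
 c(x,n)=e^{-i\ip n x}A_{a_1}\bigl(a_2(\cdot,n)e^{i\ip n\cdot}\bigr)(x)=\sum_{\nu}a_1(x,n+\nu)\,a_{2,\nu}(n)e^{i\ip\nu x}.
\]
Then estimate $\Delta_n^\alpha\partial_x^\beta$ of each term by the discrete Leibniz rule. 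The rapid decay in $\nu$ comes from the analogue of Lemma~\ref{lem:coefficients} for $S^m_{\rho,\delta}$, namely $|\Delta_n^\alpha a_{2,\nu}(n)|\leq C(1+|\nu|)^{-L}\la n\ra^{m_2-\rho|\alpha|+\delta L}$. The bounds on $a_1(x,n+\nu)$ need Peetre's inequality, $\la n+\nu\ra^{s}\leq C\la n\ra^{s}(1+|\nu|)^{|s|}$. This direct route is exactly where $\delta<\rho$ is needed: it is the hardest step, and it is where I would simply cite \cite{RTbook}.

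\emph{Triangularity.} This part is purely algebraic. Using the formula above, the $x$-Fourier coefficients of $c$ are
\[
 c_\mu(n)=\sum_{\nu+\lambda=\mu}a_{1,\lambda}(n+\nu)\,a_{2,\nu}(n),
\]
and the series converges absolutely by the coefficient decay. Fix $n\in\Zp$ and $\mu$ with $n+\mu\notin\Zp$, and look at a single term.
\begin{itemize}
\item If $n+\nu\notin\Zp$, then $a_{2,\nu}(n)=0$ by triangularity of $a_2$.
\item If $n+\nu\in\Zp$, then $(n+\nu)+\lambda=n+\mu\notin\Zp$, so $a_{1,\lambda}(n+\nu)=0$ by triangularity of $a_1$.
\end{itemize}
Every term therefore vanishes, so $c_\mu(n)=0$, which is \eqref{eq:rho-triangular}.

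There is also a more conceptual route to the same conclusion. Proposition~\ref{prop:invariance} holds verbatim for $S^m_{\rho,\delta}$: its proof only used coefficient decay and the test functions $e^{i\ip n x}$. By that proposition, $A_{a_1}A_{a_2}=A_c$ preserves $\cD'_+$, and its converse direction then yields triangularity of $c$. I would include the direct coefficient computation as the main argument, since it avoids re-verifying the proposition in the $(\rho,\delta)$ setting.
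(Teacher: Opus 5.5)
Your proposal is correct. For membership in $S^{m_1+m_2}_{\rho,\delta}$ you do what the paper does: you cite the Ruzhansky--Turunen composition theorem in the range $\delta<\rho$. For triangularity your main argument differs in presentation. The paper argues at the operator level, which is the \emph{conceptual route} you mention. It uses the $(\rho,\delta)$ version of Proposition~\ref{prop:invariance}, with coefficient bound $\la n\ra^{m_j+\delta L}(1+|\nu|)^{-L}$ and $\delta<1$ guaranteeing convergence of the output series. This shows that each $A_{a_j}$, and hence $A_c$, maps $\cD'_+(\T)$ continuously into itself. The paper then reads off triangularity of $c$ by testing $A_c$ on $e^{i\ip n x}$. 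Your identity $c_\mu(n)=\sum_{\nu+\lambda=\mu}a_{1,\lambda}(n+\nu)\,a_{2,\nu}(n)$ is that test written out explicitly: $c_\mu(n)$ is the $(n+\mu)$-th Fourier coefficient of $A_{a_1}\bigl(A_{a_2}e^{i\ip n x}\bigr)$. Your two cases are, respectively, positivity of the spectrum of $A_{a_2}e^{i\ip n x}$ and invariance under $A_{a_1}$. What your version buys is economy. It needs only convergence for each fixed $n$. That follows from uniform convergence in $x$ of $\sum_\nu a_1(x,n+\nu)a_{2,\nu}(n)e^{i\ip\nu x}$, since $a_{2,\nu}(n)$ decays rapidly in $\nu$ by smoothness of $a_2(\cdot,n)$ while $a_1(x,n+\nu)$ grows at most polynomially. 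You therefore need neither continuity on all of $\cD'_+$ nor $\delta<1$ for this part. It also makes the closure of the triangular condition under this convolution of coefficients completely transparent. The paper's version, in exchange, records the additional operator-level fact that $A_{a_1}A_{a_2}$ acts continuously on $\cD'_+(\T)$.
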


\begin{proof}
The exact toroidal composition formula in the range $\delta<\rho$
from \cite{RTbook} defines $c$ by
$A_c=A_{a_1}A_{a_2}$ and gives
$c\in S^{m_1+m_2}_{\rho,\delta}(\T\times\Z)$.  Each factor preserves
$\cD'_+(\T)$ by the argument of Proposition~\ref{prop:invariance}.  Indeed,
for $a_j$, integration by parts $L$ times gives a coefficient bound of size
$\la n\ra^{m_j+\delta L}(1+|\nu|)^{-L}$; since $\delta<1$, choosing $L$
sufficiently large still makes the output-coefficient series converge.
Hence the composition preserves
$\cD'_+(\T)$.  Testing $A_c$ on $e^{i\ip n x}$, exactly as in the converse
part of Proposition~\ref{prop:invariance}, shows directly that $c$ satisfies
\eqref{eq:rho-triangular}.
\end{proof}

\begin{remark}
Proposition~\ref{prop:triangular-composition} supplies an algebra of
holomorphy-preserving operators.  A corresponding parametrix statement is
more delicate: an ambient elliptic parametrix need not preserve the positive
cone, and smoothing remainders may have infinite-dimensional defect after
restriction to the holomorphic subspace; see
Remark~\ref{rem:ellipticity}.
\end{remark}

The next estimate is recorded principally for its consequence at
$\rho=1$, where it has no derivative loss.  For $0<\rho<1$ it also gives a
direct, but generally nonsharp, maximal-function bound.

\begin{lemma}
\label{lem:rho-dyadic}
Let $a\in S^m_{\rho,\delta}(\T\times\Z)$ with
$0\leq\delta<\rho\leq1$.  Given $N>0$ and $0<r\leq1$, one has
\begin{equation}\label{eq:rho-pointwise}
 |\Delta_jA_a\Psi_kg(x)|
 \leq C_{N,r}
 2^{k[m+d(1-\rho)/r]}2^{-N|j-k|}\mathcal M_rg(x)
\end{equation}
whenever the Fourier spectrum of $g$ is contained in the dyadic support of
$\Psi_k$.
\end{lemma}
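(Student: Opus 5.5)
We follow the two-step scheme of Lemmas~\ref{lem:kernel} and \ref{lem:max-domination}, but with the spatial localization now at the coarser scale $2^{-\rho k}$. First we establish the $(\rho,\delta)$ analogue of Lemma~\ref{lem:coefficients}: for $a\in S^m_{\rho,\delta}$, integrating by parts $L$ times in a coordinate $x_j$ with $|\nu_j|\geq|\nu|/\sqrt d$ gives
\[
 |\Delta_n^\alpha a_\nu(n)|\leq C_{\alpha,L}(1+|\nu|)^{-L}\la n\ra^{m-\rho|\alpha|+\delta L}.
\]
Next, for the kernel $H_k$ of $A_a\Psi_k$ in \eqref{eq:basic-kernel}, the discrete Leibniz formula gives
\[
 |\Delta_n^\gamma\{a(x,n)\psi_k(n)\}|\leq C_\gamma 2^{k(m-\rho|\gamma|)}.
\]
Here the differences of $\psi_k$ cost $2^{-k}\leq2^{-\rho k}$. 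Summation by parts, exactly as in the proof of Lemma~\ref{lem:kernel}, then yields
\[
 |H_k(x,y)|\leq C_L2^{k(m+d)}(1+2^{\rho k}\tdist(x,y))^{-L}.
\]
Writing $2^{k(m+d)}=2^{k[m+d(1-\rho)]}\cdot2^{\rho kd}$ puts this in the form
\[
 2^{k[m+d(1-\rho)]}\,2^{\rho kd}(1+2^{\rho k}\tdist)^{-L},
\]
that is, a normalized bump at scale $2^{-\rho k}$ times the loss $2^{kd(1-\rho)}$.

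For the off-diagonal factor $2^{-N|j-k|}$ we repeat the Fourier-expansion argument. If $|j-k|\geq4$, the output projection forces $|\nu|\gtrsim2^{\max(j,k)}$. The coefficient bound with $L_0$ large then gives
\[
 (1+|\nu|)^{-L_0}2^{k\delta L_0}\leq 2^{-(1-\delta)L_0\max(j,k)},
\]
and since $\delta<\rho\leq1$ this still decays arbitrarily fast. The summation by parts in $n$ proceeds as before, with differences of $a_\nu$ costing $2^{-\rho k}$. For $|j-k|\leq3$, convolving with the kernel of $\Delta_j$ (which is concentrated at the finer scale $2^{-j}\lesssim2^{-\rho k}$) preserves the bound. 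This gives
\[
 |K_{j,k}(x,y)|\leq C2^{km}2^{-N|j-k|}2^{kd}(1+2^{\rho k}\tdist(x,y))^{-L}.
\]

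The main obstacle is the maximal-function step. The spectrum of $g$ lives at frequency $2^k$, so Lemma~\ref{lem:max-domination} cannot be applied verbatim to a kernel localized only at scale $2^{-\rho k}$; this mismatch is what produces the extra factor. We would bypass it by using the local integral estimate \eqref{eq:local-integral}, which holds for $g$ with spectrum in $|n|\lesssim2^k$ and all $u\geq0$. Decompose $\T$ into the ball $\tdist(x,y)<2^{-\rho k}$ and the annuli $2^{h-1-\rho k}\leq\tdist(x,y)<2^{h-\rho k}$, which correspond to $u=h+(1-\rho)k$ in \eqref{eq:local-integral}. On the $h$th annulus the kernel is at most $C2^{km}2^{-N|j-k|}2^{kd}2^{-hL}$, and \eqref{eq:local-integral} gives
\[
 \int|g|\leq C2^{(h+(1-\rho)k)d/r-kd}\mathcal M_rg(x).
\]
Multiplying, the factor $2^{kd}$ cancels against $2^{-kd}$, leaving
\[
 2^{km}2^{-N|j-k|}2^{k d(1-\rho)/r}2^{-h(L-d/r)}\mathcal M_rg(x).
\]
Choosing $L>d/r$ and summing in $h$ yields \eqref{eq:rho-pointwise}. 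The delicate bookkeeping is verifying that the off-diagonal decay survives the $\delta$-loss from $x$-integration by parts, which is where $\delta<\rho$ (indeed $\delta<1$) is used.
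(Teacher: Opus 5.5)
Your proposal is correct and matches the paper's proof step by step: the Leibniz bound $2^{k(m-\rho|\gamma|)}$, the kernel bound $2^{k(m+d)}(1+2^{\rho k}\tdist(x,y))^{-L}$, the off-diagonal decay from the $x$-Fourier coefficients (where $\delta<1$ absorbs the loss $\la n\ra^{\delta M}$), and the maximal step via \eqref{eq:local-integral} with $u=h+(1-\rho)k$. Your explicit mixed coefficient bound and your treatment of the diagonal case $|j-k|\leq 3$ only spell out what the paper leaves implicit; the diagonal case works because the kernel of $\Delta_j$ lives at scale $2^{-j}\lesssim 2^{-\rho k}$.
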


\begin{proof}
We indicate all changes to Lemma~\ref{lem:kernel}.  On the support of
$\psi_k$, the discrete Leibniz formula and
\eqref{eq:rho-delta-symbol} give
\begin{equation}\label{eq:rho-difference-product}
 \left|\Delta_n^\gamma\{a(x,n)\psi_k(n)\}\right|
 \leq C_\gamma2^{k(m-\rho|\gamma|)}.
\end{equation}
Indeed, a term containing multiorder $\mu$ differences of $a$ and
$\gamma-\mu$ differences of $\psi_k$ is at most
\[
 C2^{k(m-\rho|\mu|)}2^{-k(|\gamma|-|\mu|)}
 \leq C2^{k(m-\rho|\gamma|)}.
\]
Repeated summation by parts in \eqref{eq:basic-kernel} therefore yields
\begin{equation}\label{eq:rho-basic-kernel}
 |H_k(x,y)|
 \leq C_L2^{k(m+d)}
 (1+2^{\rho k}\tdist(x,y))^{-L}.
\end{equation}

The output projection gives the same off-diagonal factor as before.  If
$|j-k|\geq4$, the output constraint $|n+\nu|\asymp2^j$ and input constraint
$|n|\asymp2^k$ imply
\[
 |\nu|\geq c2^{\max(j,k)}.
\]
Integration by parts $M$ times in $x$ gives
\begin{equation}\label{eq:rho-fourier-coeff}
 |a_\nu(n)|
 \leq C_M(1+|\nu|)^{-M}\la n\ra^{m+\delta M}
 \leq C_M2^{km}2^{-M\max(j,k)+\delta Mk}.
\end{equation}
When $j\geq k+4$, this contains $2^{-M(j-k)}2^{-(1-\delta)Mk}$; when
$k\geq j+4$, it contains $2^{-(1-\delta)Mk}$.  Because $\delta<1$, $M$ can
be chosen so large that either expression is bounded by
$C_N2^{-N|j-k|}$, with enough residual decay to carry out the spatial
summation-by-parts argument based on \eqref{eq:rho-difference-product}.
Combining the diagonal and off-diagonal cases gives the kernel
bound
\begin{equation}\label{eq:rho-full-kernel}
 |K_{j,k}(x,y)|
 \leq C_{N,L}2^{k(m+d)}2^{-N|j-k|}
 (1+2^{\rho k}\tdist(x,y))^{-L}.
\end{equation}

It remains to convert \eqref{eq:rho-full-kernel} to a maximal estimate.  Put
\[
 E_h=\{y:2^{h-1-\rho k}\leq\tdist(x,y)<2^{h-\rho k}\},
 \qquad h\geq0,
\]
with the evident interpretation for $h=0$.  The band-limited submean
estimate \eqref{eq:local-integral}, applied at bandwidth $2^k$ with
\[
 u=h+(1-\rho)k,
\]
implies
\begin{equation}\label{eq:rho-local-integral}
 \int_{E_h}|g(y)|\dd y
 \leq C2^{hd/r-kd+d(1-\rho)k/r}\mathcal M_rg(x).
\end{equation}
Indeed, the radius $2^{h-\rho k}$ equals $2^{u-k}$ and
\[
 2^{ud/r-kd}=2^{hd/r-kd+d(1-\rho)k/r}.
\]
Thus the factor $2^{d(1-\rho)k/r}$ is exactly the cost of controlling a
function of bandwidth $2^k$ on a kernel ball of the larger radius
$2^{-\rho k}$.
On $E_h$, the right side of \eqref{eq:rho-full-kernel} is at most
\[
 C2^{k(m+d)}2^{-N|j-k|}2^{-hL}.
\]
Multiplication by \eqref{eq:rho-local-integral}, followed by summation in
$h$, gives
\[
 C2^{k[m+d(1-\rho)/r]}2^{-N|j-k|}\mathcal M_rg(x)
 \sum_{h\geq0}2^{-h(L-d/r)}.
\]
Choose $L>d/r$.  This proves \eqref{eq:rho-pointwise}.
\end{proof}

The dyadic estimate will be used below for the endpoint $\rho=1$.  For
$0<\rho<1$, the main non-endpoint result instead follows by localization of
the sharp Euclidean theorem.  Set
\begin{equation}\label{eq:critical-loss}
 \sigma_p(\rho):=d(1-\rho)\left|\frac1p-\frac12\right|.
\end{equation}
Here and below $1/\infty$ is understood to be zero.

\begin{lemma}\label{lem:rho-localization}
Let $0<\rho<1$, $0\leq\delta<\rho$, and
$a\in S^m_{\rho,\delta}(\T\times\Z)$.  There are a properly supported
operator $A_0$ and a smoothing operator $R$ on $\T$ such that
$A_a=A_0+R$.  Moreover, there are finitely many cutoffs
$\eta_\mu,\theta_\mu\in C^\infty(\T)$ satisfying
\[
 \sum_\mu\eta_\mu=1,
 \qquad
 \eta_\mu A_0f=\eta_\mu A_0(\theta_\mu f),
\]
for which every localized operator $\eta_\mu A_0\theta_\mu$, lifted to a
coordinate chart in $\mathbb R^d$, belongs to
$\Op S^m_{\rho,\delta}(\mathbb R^d\times\mathbb R^d)$, with each of its
symbol seminorms controlled by finitely many seminorms of $a$.
\end{lemma}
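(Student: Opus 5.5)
Write the kernel as $K_a(x,y)=k(x,x-y)$ with $k(x,t)=\sum_{n\in\Z}a(x,n)e^{i\ip n t}$ (a distribution in $t$), and split it with a cutoff near the diagonal. Fix $\chi\in C^\infty(\T)$, equal to one on $\{\tdist(t,0)<\varepsilon\}$ and supported in $\{\tdist(t,0)<2\varepsilon\}$, with $\varepsilon$ small, say $\varepsilon<\pi/8$. Put $A_0$ the operator with kernel $\chi(x-y)K_a(x,y)$, and $R=A_a-A_0$, whose kernel is $(1-\chi(x-y))K_a(x,y)$.

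\textbf{Step 1 (smoothing remainder).} Use a dyadic decomposition $a=\sum_k a\psi_k$ and the summation-by-parts argument of Lemma~\ref{lem:rho-dyadic}, estimate \eqref{eq:rho-basic-kernel}, together with its $x$- and $y$-derivative versions. Derivatives cost at most $2^{k(1+\delta)}$ factors per order, while summation by parts with \eqref{eq:rho-difference-product} gains $2^{-\rho k}$ per step against $\tdist(x,y)^{-1}$. On $\supp(1-\chi(x-y))$ we have $\tdist(x,y)\ge\varepsilon$, so each dyadic piece of every derivative of the kernel is $O(2^{-kL})$ for arbitrary $L$ (using $\rho>0$). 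Summing in $k$ shows that the kernel of $R$ is $C^\infty(\T\times\T)$, so $R$ is smoothing. $A_0$ is properly supported, with kernel supported in $\tdist(x,y)<2\varepsilon$.

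\textbf{Step 2 (cutoffs).} Cover $\T$ by finitely many balls $B_\mu$ of radius $\varepsilon$. Take a subordinate partition of unity $\eta_\mu$, and $\theta_\mu\in C^\infty$ equal to one on the $3\varepsilon$-neighbourhood of $B_\mu$ and supported in the $4\varepsilon$-neighbourhood. Since $\supp\theta_\mu$ has diameter $<\pi$, it lies in one coordinate chart, on which $\T$ is identified isometrically with a cube in $\mathbb R^d$. The support property of the kernel gives $\eta_\mu A_0 f=\eta_\mu A_0(\theta_\mu f)$ directly.

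\textbf{Step 3 (Euclidean symbol).} In the chart, $\eta_\mu A_0\theta_\mu$ has kernel $\eta_\mu(x)\chi(x-y)\theta_\mu(y)k(x,x-y)$ with $x,y\in\mathbb R^d$ and $x-y$ small. Its Kohn--Nirenberg symbol is
\[
c(x,\xi)=\eta_\mu(x)\int_{\mathbb R^d}\chi(t)\theta_\mu(x-t)k(x,t)e^{-i\ip\xi t}\dd t.
\]
For $\theta_\mu\equiv1$ near $x$ this is $\eta_\mu(x)\sum_n a(x,n)\wh\chi(\xi-n)$, a smoothing of $a(x,\cdot)$ by a Schwartz function; the $\theta_\mu$ factor is one on the relevant region, so it drops out. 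The required estimates are
\[
|\partial_\xi^\alpha\partial_x^\beta c(x,\xi)|\le C\la\xi\ra^{m-\rho|\alpha|+\delta|\beta|}.
\]

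The main obstacle is converting discrete differences into continuous derivatives. Write $\partial_\xi^\alpha\wh\chi(\xi-n)$ and sum by parts in $n$; the identity $\sum_n\partial^\alpha\wh\chi(\xi-n)\,p(n)=0$ for polynomials $p$ of degree $<|\alpha|$ (Poisson summation, since $\chi\equiv1$ near $0$ makes $\sum_n\wh\chi(\xi-n)e^{i\ip n t}$ smooth in $t$) lets one transfer $|\alpha|$ discrete differences onto $a$. Then the Schwartz decay of $\wh\chi$ and Peetre's inequality $\la n\ra^s\le C\la\xi\ra^s\la\xi-n\ra^{|s|}$ give the bound. I expect this step to need the standard Ruzhansky--Turunen extension lemma from \cite{RTbook} (toroidal symbols extend to Euclidean symbols of the same class), which I would invoke directly, controlled by finitely many seminorms of $a$. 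The $x$-derivatives pass through the formula without difficulty.
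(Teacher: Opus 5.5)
Your proposal is correct, and in the key Euclidean-symbol step it takes a genuinely different route from the paper.

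\textbf{The paper's route.} It uses a general diagonal cutoff $\zeta(x,y)$ and proceeds in three cited steps.
\begin{enumerate}
\item The Ruzhansky--Turunen extension theorem gives $\widetilde a\in S^m_{\rho,\delta}(\T\times\mathbb R^d)$.
\item The periodization identity identifies the toroidal kernel with the Euclidean kernel of $\widetilde a$, modulo smooth terms from nonzero translates.
\item The $y$-dependent amplitude $\eta_\mu(x)\zeta(x,y)\widetilde a(x,\xi)\theta_\mu(y)$ is reduced to a Kohn--Nirenberg symbol by the amplitude-reduction theorem, which is where $\delta<\rho$ is invoked; smoothing errors are absorbed into the symbol.
\end{enumerate}

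\textbf{Your route.} You take a translation-invariant cutoff $\chi(x-y)$ and note that $\theta_\mu\equiv1$ wherever $\eta_\mu(x)\chi(x-y)\neq0$. The lifted operator is therefore exactly $\Op(c)$ with $c(x,\xi)=\eta_\mu(x)\sum_n a(x,n)\wh\chi(\xi-n)$, up to normalization. There is no periodization error, no $y$-dependent amplitude, and no in-chart smoothing remainder. This gives a self-contained proof with fully explicit seminorm control. It also shows the localization step needs only $\rho>0$ (for smoothness of the kernel of $R$), not $\delta<\rho$. The paper's argument is shorter on the page but rests on three external theorems.

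\textbf{Technical remarks.}
\begin{itemize}
\item You do not need the extension lemma as a fallback. Invoking it would not plug in without a periodization argument anyway. Your $c$ is essentially the Ruzhansky--Turunen extension construction with $\wh\chi$ as the smoothing kernel; the interpolation property $\wh\chi(k)=\delta_{0,k}$ is irrelevant, since you need a symbol of the operator, not an extension of $a$.
\item The clean way to move differences onto $a$, uniformly in $\xi$, is the factorization $(-it)^\alpha\chi(t)=\prod_j(e^{it_j}-1)^{\alpha_j}\,\tilde\chi_\alpha(t)$, where $\tilde\chi_\alpha(t):=\chi(t)\prod_j\bigl(-it_j/(e^{it_j}-1)\bigr)^{\alpha_j}\in C_c^\infty$ because $\supp\chi$ is small. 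This gives $\partial_\xi^\alpha\wh\chi(\xi-n)=\Delta_n^\alpha\bigl[\wh{\tilde\chi_\alpha}(\xi-n)\bigr]$. Summation by parts then puts $|\alpha|$ backward differences on $a$ (and on $\partial_x^\beta a$), and Schwartz decay plus Peetre's inequality yields $\la\xi\ra^{m-\rho|\alpha|+\delta|\beta|}$.
\item Your parenthetical reason for polynomial annihilation is off. The operative fact is that near $t=0$ only the $k=0$ Poisson translate survives, because $\supp\chi$ is small. Consequently $\sum_n n^\beta\wh\chi(\xi-n)$ is a polynomial of degree $|\beta|$ in $\xi$. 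It is not that $\chi\equiv1$ makes the sum smooth.
\item With $\varepsilon<\pi/8$, the $4\varepsilon$-neighbourhood of a ball of radius $\varepsilon$ can have diameter $10\varepsilon>\pi$. Take $\varepsilon<\pi/10$ so that $\supp\theta_\mu$ lifts isometrically to the chart.
\end{itemize}
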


\begin{proof}
The distribution kernel of $A_a$ is
\[
 K_a(x,y)=\sum_{n\in\Z}e^{i\ip n{(x-y)}}a(x,n).
\]
It is smooth away from the diagonal.  Indeed, on a compact subset of
$(\T\times\T)\setminus\operatorname{diag}$, repeated summation by parts in
$n$ expresses every $x,y$ derivative of the kernel in terms of a series
whose summands are bounded by
\[
 C_{\beta,\gamma,N}\la n\ra^{m+|\beta|+|\gamma|-\rho N}.
\]
Here $x,y$ derivatives contribute at most $|\beta|+|\gamma|$ powers of
$\la n\ra$, whereas the $N$ summation-by-parts differences supply the
displayed decay; differences falling on polynomial phase factors only
improve this bound.  Choosing $N$ sufficiently large
proves convergence with all derivatives.

Choose $\zeta\in C^\infty(\T\times\T)$ equal to one near the diagonal and
supported where $\tdist(x,y)<2\varepsilon<\pi$.  Let $A_0$ have kernel
$\zeta K_a$ and let $R$ have kernel $(1-\zeta)K_a$.  The preceding paragraph
shows that $R$ is smoothing, i.e.,  the operator has a $C^\infty$ kernel and 
it maps $\cD'(\T)$ continuously into $C^\infty(\T)$.

Take a finite partition of unity $\sum_\mu\eta_\mu=1$, with each
$\eta_\mu$ compactly supported in a coordinate chart, and choose
$\theta_\mu$ equal to one on the
$2\varepsilon$-neighbourhood of $\supp\eta_\mu$.  Proper support gives
$\eta_\mu A_0f=\eta_\mu A_0(\theta_\mu f)$.  By the extension theorem for
toroidal symbols \cite[Theorem~5.2]{RT2010}, $a$ is the restriction to
$\Z$ of a periodic Euclidean symbol
\[
 \widetilde a\in S^m_{\rho,\delta}(\T\times\mathbb R^d).
\]
The periodization identity \cite[Theorem~6.4]{RT2010} identifies the
toroidal kernel locally with the Euclidean kernel of $\widetilde a$.  The
terms coming from nonzero translates of the coordinate chart are smooth,
because they stay away from the diagonal.  Thus, modulo a smoothing
operator, the localized operator has the compactly supported amplitude
\[
 c_\mu(x,y,\xi)
 =\eta_\mu(x)\zeta(x,y)\widetilde a(x,\xi)\theta_\mu(y).
\]
It satisfies
\[
 |\partial_\xi^\alpha\partial_x^\beta\partial_y^\gamma
   c_\mu(x,y,\xi)|
 \leq C_{\alpha,\beta,\gamma}
 \la\xi\ra^{m-\rho|\alpha|+\delta(|\beta|+|\gamma|)}.
\]
Since $\delta<\rho$, the standard amplitude-reduction theorem places the
corresponding operator in
$\Op S^m_{\rho,\delta}(\mathbb R^d\times\mathbb R^d)$, modulo a smoothing
operator, which may be absorbed into its symbol; see also the compatible
compound-symbol reduction in \cite[Appendix~A]{Park2018}.   
 We notice that the extension, cutoff, and amplitude-reduction constructions are
continuous in their respective symbol topologies, so every resulting
seminorm is controlled by finitely many toroidal seminorms of $a$. This completes the proof of the
lemma.
\end{proof}

With this local reduction to the Euclidean calculus in hand, the sharp
Euclidean $(\rho,\delta)$ estimates of Park \cite{Park2018} transfer
directly to the strict range $\kappa>\sigma_p(\rho)$.

\begin{proposition}\label{prop:rho-ambient}
Let $0<\rho<1$, $0\leq\delta<\rho$, and
$a\in S^m_{\rho,\delta}(\T\times\Z)$.  If
\begin{equation}\label{eq:strict-critical-loss}
 \kappa>\sigma_p(\rho),
\end{equation}
then, for every $s\in\mathbb R$,
\begin{align}
 A_a&:B^{s+m+\kappa}_{p,q}(\T)\longrightarrow B^s_{p,q}(\T),
 &&0<p,q\leq\infty,\label{eq:sharp-periodic-B}\\
 A_a&:F^{s+m+\kappa}_{p,q}(\T)\longrightarrow F^s_{p,q}(\T),
 &&0<p<\infty,\quad0<q\leq\infty.\label{eq:sharp-periodic-F}
\end{align}
\end{proposition}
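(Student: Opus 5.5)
Our plan is to reduce the statement to the sharp Euclidean theorem of Park \cite{Park2018} via Lemma~\ref{lem:rho-localization}, treating the smoothing remainder separately. Write $A_a=A_0+R$ and $A_0=\sum_\mu\eta_\mu A_0\theta_\mu$ as in that lemma. The remainder $R$ has a $C^\infty$ kernel on the compact manifold $\T$, so it maps $\cD'(\T)$, and in particular every $B^{s+m+\kappa}_{p,q}(\T)$ and $F^{s+m+\kappa}_{p,q}(\T)$, continuously into $C^\infty(\T)\subset B^s_{p,q}(\T)\cap F^s_{p,q}(\T)$. To get a quantitative bound for $p<1$, we will not argue by duality. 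Instead we expand $R$ as a toroidal operator whose symbol $r(x,n)=e^{-i\ip nx}R(e^{i\ip n\cdot})(x)$ decays faster than any power of $\la n\ra$ together with all $x$-derivatives and $n$-differences. Then $r\in S^{-\infty}_{1,0}$, and Proposition~\ref{prop:ambient} with arbitrarily negative order gives boundedness in the whole quasi-Banach range. This handles $R$.

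For each localized piece we proceed in three steps. First, lift $\eta_\mu A_0\theta_\mu$ to the coordinate chart, where it is an operator $T_\mu\in\Op S^m_{\rho,\delta}(\mathbb R^d\times\mathbb R^d)$ whose kernel is compactly supported in a fixed cube. Second, invoke Park's sharp Euclidean result: for $\kappa>\sigma_p(\rho)$, $T_\mu$ maps $B^{s+m+\kappa}_{p,q}(\mathbb R^d)\to B^s_{p,q}(\mathbb R^d)$ and $F^{s+m+\kappa}_{p,q}(\mathbb R^d)\to F^s_{p,q}(\mathbb R^d)$ in the stated ranges. The strict inequality in \eqref{eq:strict-critical-loss} leaves room to absorb any endpoint subtleties, including $q$-dependence. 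Third, transfer back to $\T$ using the standard localization principle for periodic spaces. The relevant facts are that $\|\theta f\|_{X(\mathbb R^d)}\lesssim\|f\|_{X(\T)}$ for a cutoff $\theta$ supported in one fundamental chart, where $f$ is viewed periodically, and conversely that a compactly supported distribution in the chart has a periodization with comparable norm. Both follow since multiplication by smooth compactly supported functions is bounded on all $B$ and $F$ spaces. The toroidal and Euclidean Littlewood--Paley pieces of such localized distributions are also comparable, by the periodization identity already used in Lemma~\ref{lem:rho-localization}. Summing over the finitely many $\mu$ with the quasi-triangle inequality then gives \eqref{eq:sharp-periodic-B} and \eqref{eq:sharp-periodic-F}.

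The main obstacle is this third step. We must justify that the Besov and Triebel--Lizorkin quasi-norms on $\T$ restricted to chart-localized distributions are equivalent to the Euclidean ones for all $0<p,q\leq\infty$, uniformly enough to be iterated with the cutoffs $\eta_\mu,\theta_\mu$. We would prove it by comparing $\varphi_j(D)$ on $\T$ with $\varphi_j(D)$ on $\mathbb R^d$ acting on the periodization of $\theta_\mu f$. The difference between the two comes only from translates of the kernel away from the support. It is controlled by the rapid decay of the Euclidean kernel $2^{jd}\check\varphi(2^j\cdot)$ at distances at least about one, which yields errors of size $2^{-jN}$ times lower-order norms. Those errors are then summed with the Peetre maximal estimate \eqref{eq:submean}. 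Finally, we must check that the constants depend only on finitely many seminorms of $a$, which follows from the last assertion of Lemma~\ref{lem:rho-localization}.
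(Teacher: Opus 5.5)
Your proposal is correct and follows the paper's route. You decompose via Lemma~\ref{lem:rho-localization}, apply Park's strict Euclidean theorem to each chart lift $T_\mu$ with $s_1=s+m+\kappa$, $s_2=s$, and transfer back through the local equivalence of periodic and Euclidean quasi-norms, which the paper likewise only sketches via kernel decay and the Peetre maximal estimate. One difference is your handling of the remainder: you write $R$ as a toroidal operator with symbol in $S^{-\infty}_{1,0}$ and invoke Proposition~\ref{prop:ambient}, which is a slightly more careful version of the paper's one-line claim that smooth kernels are harmless and cleanly covers $p<1$.
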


\begin{proof}
We apply Lemma~\ref{lem:rho-localization}.  A smooth kernel maps every periodic
Besov or Triebel--Lizorkin space continuously into every smoother space, so
the remainder $R$ is harmless.

For a cutoff supported compactly inside one coordinate chart, periodic
and Euclidean norms are locally equivalent. More precisely, choose the
partition of unity so that
\(\operatorname{supp}\eta_\mu\Subset U_\mu\), where
\(\kappa_\mu:U_\mu\to V_\mu\subset\mathbb R^d\) is a coordinate chart.
We write
\((\eta_\mu h)^{\kappa_\mu}:=(\eta_\mu h)\circ\kappa_\mu^{-1}\) on
\(V_\mu\), extended by zero to \(\mathbb R^d\). Then,
for \(Y\in\{B,F\}\) and \(u\in\mathbb R\),
\begin{equation}\label{eq:periodic-local-norms}
 \|h\|_{Y^u_{p,q}(\mathbb T^d)}
 \asymp
 \left(
   \sum_\mu
   \|(\eta_\mu h)^{\kappa_\mu}\|_
      {Y^u_{p,q}(\mathbb R^d)}^\tau
 \right)^{1/\tau},
 \qquad
 0<\tau\leq\min(1,p,q),
\end{equation}
where \(\min(1,p,\infty)=\min(1,p)\). The constants depend only on the
fixed finite atlas, the partition of unity, and the chosen
Littlewood--Paley resolutions.

Indeed, using the auxiliary projections \(\Psi_k\), the rapid decay of
the Littlewood--Paley kernels, and the Peetre maximal estimate from
Lemma~\ref{lem:max-domination}, one obtains the localized almost-diagonal estimates needed to
compare the periodic and Euclidean Littlewood--Paley pieces. Discrete
convolution in the scale index, followed by the scalar or vector-valued
maximal inequality exactly as in Proposition~\ref{prop:ambient}, gives
\[
 \|(\eta_\mu h)^{\kappa_\mu}\|_
    {Y^u_{p,q}(\mathbb R^d)}
 \leq
 C_\mu\|h\|_{Y^u_{p,q}(\mathbb T^d)}.
\]
The converse follows by the analogous argument with the periodic and
Euclidean Littlewood--Paley projections interchanged, using a cutoff
equal to one on a neighbourhood of
\(\operatorname{supp}\eta_\mu\). Finally, since the atlas is finite, all
positive sequence quasi-norms on the chart index set are equivalent.
This proves \eqref{eq:periodic-local-norms}.

Let $T_\mu$ be the Euclidean lift of $\eta_\mu A_0\theta_\mu$.  The preceding
lemma gives
\[
 T_\mu\in\Op S^m_{\rho,\delta}(\mathbb R^d\times\mathbb R^d)
 \subseteq\Op S^m_{\rho,\rho}(\mathbb R^d\times\mathbb R^d).
\]
Apply the strict part of the sharp Euclidean theorem
\cite[Theorems~1.1--1.4]{Park2018} with
\[
 s_1=s+m+\kappa,\qquad s_2=s.
\]
Its order condition holds because
\[
 m-s_1+s_2=-\kappa
 <-d(1-\rho)\left|\frac1p-\frac12\right|.
\]
Taking equal source and target fine indices gives
\[
 \|T_\mu g\|_{Y^s_{p,q}(\mathbb R^d)}
 \leq C_\mu\|g\|_{Y^{s+m+\kappa}_{p,q}(\mathbb R^d)},
 \qquad Y\in\{B,F\}.
\]
Consequently,
\[
 \|\eta_\mu A_0f\|_{Y^s_{p,q}(\mathbb R^d)}
 \leq C_\mu
 \|\theta_\mu f\|_{Y^{s+m+\kappa}_{p,q}(\mathbb R^d)}.
\]
We now sum the finitely many estimates using \eqref{eq:periodic-local-norms} and
the boundedness of multiplication by $\theta_\mu$.  This proves
\eqref{eq:sharp-periodic-B} and \eqref{eq:sharp-periodic-F} for $A_0$.
Adding the smoothing remainder completes the proof.
\end{proof}

The strict estimate just proved excludes the critical loss
$\kappa=\sigma_p(\rho)$.  We now record the endpoint case, in which
boundedness persists at $\kappa=\sigma_p(\rho)$ itself, at the cost of a
possible change of the secondary index.

\begin{proposition}
\label{prop:rho-endpoint}
Let $0<\rho<1$, $0\leq\delta<\rho$, and
$a\in S^m_{\rho,\delta}(\T\times\Z)$.  For every $s\in\mathbb R$ the
following endpoint estimates hold.
\begin{enumerate}[label=\textup{(\roman*)}]
\item If $0<p\leq\infty$ and $0<q\leq t\leq\infty$, then
\begin{equation}\label{eq:endpoint-periodic-B}
 A_a:B^{s+m+\sigma_p(\rho)}_{p,q}(\T)
 \longrightarrow B^s_{p,t}(\T).
\end{equation}
\item If $0<p<\infty$ and $0<q,t\leq\infty$, then
\begin{equation}\label{eq:endpoint-periodic-F}
 A_a:F^{s+m+\sigma_p(\rho)}_{p,q}(\T)
 \longrightarrow F^s_{p,t}(\T)
\end{equation}
provided one of the following conditions holds:
\begin{align*}
 &0<p<2, &&p\leq t,\quad q\text{ arbitrary};\\
 &p=2,   &&q\leq2\leq t;\\
 &2<p<\infty, &&q\leq p,\quad t\text{ arbitrary}.
\end{align*}
\end{enumerate}
\end{proposition}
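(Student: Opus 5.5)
The plan is to follow the proof of Proposition~\ref{prop:rho-ambient} verbatim, replacing the strict part of the Euclidean theorem by its endpoint part. Lemma~\ref{lem:rho-localization} gives $A_a=A_0+R$ with $R$ smoothing. A smoothing operator maps $B^{s+m+\sigma_p(\rho)}_{p,q}(\T)$ into $B^{s+1}_{p,\infty}(\T)\hookrightarrow B^s_{p,t}(\T)$ for every $t$. The same argument on the $F$ scale, via $F^{s+1}_{p,\infty}\hookrightarrow F^s_{p,t}$, handles $R$ regardless of the index conditions. For $A_0$ we use the finite decomposition $\eta_\mu A_0=\eta_\mu A_0\theta_\mu$ and the Euclidean lifts $T_\mu\in\Op S^m_{\rho,\delta}(\mathbb R^d\times\mathbb R^d)\subseteq\Op S^m_{\rho,\rho}$, with seminorms controlled by those of $a$.

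Next we apply the endpoint statements of Park \cite[Theorems~1.1--1.4]{Park2018} with $s_1=s+m+\sigma_p(\rho)$ and $s_2=s$, so that $m-s_1+s_2=-\sigma_p(\rho)$ is exactly critical. In the Besov case, Park's endpoint theorem gives $T_\mu:B^{s_1}_{p,q}(\mathbb R^d)\to B^{s_2}_{p,t}(\mathbb R^d)$ precisely when $q\leq t$, for all $0<p\leq\infty$. In the Triebel--Lizorkin case, his three regimes $p<2$, $p=2$ and $p>2$ carry exactly the displayed conditions on $q$ and $t$. We would check that the index conditions in Park's statement match those listed in (ii), including the trivial-in-$q$ case $p<2$, $p\leq t$ and the trivial-in-$t$ case $p>2$, $q\leq p$. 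If a regime were stated there only for restricted parameter ranges, we would first reduce to it using the embeddings $F^{s_1}_{p,q}\hookrightarrow F^{s_1}_{p,\max(q,\cdot)}$ and their target analogues.

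To return to the torus we use the local norm equivalence \eqref{eq:periodic-local-norms}. Its proof was stated for a single fine index, but it applies separately to the source space (index $q$) and the target space (index $t$), since the chart comparison is linear and does not mix fine indices. This yields
\[
 \|(\eta_\mu A_0f)^{\kappa_\mu}\|_{Y^s_{p,t}(\mathbb R^d)}\leq C_\mu\|(\theta_\mu f)^{\kappa_\mu}\|_{Y^{s+m+\sigma_p(\rho)}_{p,q}(\mathbb R^d)}.
\]
Multiplication by $\theta_\mu$ is bounded on the source space, and summing over the finitely many $\mu$ with the $\tau$-quasi-triangle inequality, where $\tau\leq\min(1,p,q,t)$, finishes the argument.

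The main obstacle is the faithful transfer of Park's endpoint hypotheses. His symbol class and his order condition must be applicable to $S^m_{\rho,\delta}$ with $\delta<\rho$, which holds since this class is contained in $S^m_{\rho,\rho}$. The localized norm comparison must also remain valid when the source and target fine indices differ, and when $p=\infty$ in the Besov case. The chart lift also introduces a cutoff multiplication and a change of variables (here only a translation, since the charts on $\T$ are flat). These are harmless at fixed smoothness, but they must not spoil the exact critical order, so we would verify that they act boundedly on each fixed $Y^u_{p,q}$ without any loss.
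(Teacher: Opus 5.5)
Your proposal follows the paper's proof essentially step for step: the localization of Lemma~\ref{lem:rho-localization}, Park's endpoint theorems applied to each lift $T_\mu$ with $s_1=s+m+\sigma_p(\rho)$ and $s_2=s$, the local norm equivalence \eqref{eq:periodic-local-norms} used separately for the source index $q$ and the target index $t$, and the harmless smoothing remainder. The one case the paper handles explicitly, and you leave to ``checking'', is $p=2$: there the paper conjugates by $\Lambda^s$ and $\Lambda^{-(s+m)}$ to match Park's order-zero normalization, but your embedding fallback also covers it, since $q\leq2\leq t$ reduces the claim to the standard $H^{s+m}\to H^s$ bound for $\delta<\rho$.
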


\begin{proof}
Use the decomposition and cutoffs in
Lemma~\ref{lem:rho-localization}.  For $p\ne2$, apply the endpoint parts of
\cite[Theorems~1.1--1.4]{Park2018} to each Euclidean lift $T_\mu$, with
\[
 s_1=s+m+\sigma_p(\rho),\qquad s_2=s.
\]
The critical order identity is
\[
 m-s_1+s_2=-\sigma_p(\rho)
 =-d(1-\rho)\left|\frac1p-\frac12\right|.
\]
Park's Besov endpoint requires $q\leq t$.  Its Triebel--Lizorkin endpoint
requires $p\leq t$ when $p<2$ and $q\leq p$ when $p>2$, exactly as stated.

At $p=2$ one has $\sigma_2(\rho)=0$, and Park states the central endpoint in
an order-zero normalization.  Let $\Lambda=(1-\Delta_x)^{1/2}$ and set
\[
 U_\mu=\Lambda^sT_\mu\Lambda^{-(s+m)}.
\]
The composition theorem for $\delta<\rho$ gives, modulo a smoothing
operator,
$U_\mu\in\Op S^0_{\rho,\delta}\subseteq\Op S^0_{\rho,\rho}$.  Park's
order-zero result yields
\[
 U_\mu:F^0_{2,q}(\mathbb R^d)\longrightarrow F^0_{2,t}(\mathbb R^d),
 \qquad q\leq2\leq t,
\]
which is equivalent to the asserted estimate for $T_\mu$.

In every case the local estimate has the form
\[
 \|\eta_\mu A_0f\|_{Y^s_{p,t}(\mathbb R^d)}
 \leq C_\mu
 \|\theta_\mu f\|_{Y^{s+m+\sigma_p(\rho)}_{p,q}(\mathbb R^d)},
 \qquad Y\in\{B,F\}.
\]
We apply the local norm equivalence \eqref{eq:periodic-local-norms} separately
with the source index $q$ and target index $t$, and sum over the finite
cover.  The smoothing remainder is bounded between all the spaces in
question.  This proves both assertions.
\end{proof}

For equal source and target secondary indices, Proposition~\ref{prop:rho-endpoint}
gives the endpoint map on every $B^s_{p,q}$.  On $F^s_{p,q}$ it gives the
endpoint map precisely in the ranges
\begin{equation}\label{eq:endpoint-equal-indices}
 q\geq p\quad(0<p<2),\qquad
 q=2\quad(p=2),\qquad
 q\leq p\quad(2<p<\infty).
\end{equation}

\begin{theorem}
\label{thm:rho-holomorphic}
Let $a\in S^m_{\rho,\delta,\triangle}(\T\times\Z)$.
For $0\leq\delta<\rho<1$, the conclusions of Proposition~\ref{prop:rho-ambient} hold with every
periodic space replaced by its positive-frequency subspace.  Equivalently,
for the same ranges of parameters,
\begin{align*}
 A_a&:\cB^{s+m+\kappa}_{p,q}(\D)\longrightarrow
       \cB^s_{p,q}(\D),
 &&\kappa>\sigma_p(\rho),\\
 A_a&:\cF^{s+m+\kappa}_{p,q}(\D)\longrightarrow
       \cF^s_{p,q}(\D),
 &&\kappa>\sigma_p(\rho).
\end{align*}
For $\rho=1$ and $0\leq\delta<1$, no additional loss is necessary:
Theorem~\ref{thm:main} holds with $S^m_{1,0,\triangle}$ replaced by
$S^m_{1,\delta,\triangle}$.
\end{theorem}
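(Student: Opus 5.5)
The plan mirrors the proof of Theorem~\ref{thm:main}: ambient boundedness combined with positive-frequency invariance. Two ingredients are needed.

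\textbf{Invariance.} First I will check that $A_a$ maps $\cD'_+(\T)$ into itself for every $a\in S^m_{\rho,\delta,\triangle}$ with $0\le\delta<\rho\le1$. This repeats the first half of the proof of Proposition~\ref{prop:invariance}, as in Proposition~\ref{prop:triangular-composition}. Integrating by parts $L$ times in $x$ gives
\[
 |a_\nu(n)|\le C_L(1+|\nu|)^{-L}\la n\ra^{m+\delta L}.
\]
Take $f\in\cD'_+$ with $|\wh f(n)|\le C\la n\ra^M$ and fix $\ell$. For $n\in\Zp$ we have $|\ell-n|\gtrsim\la n\ra$ once $|n|$ is large relative to $|\ell|$. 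Hence the summands of \eqref{eq:output-coefficient} are $O(\la n\ra^{m+M-(1-\delta)L})$, and since $\delta<1$ a large $L$ makes the series absolutely convergent. Splitting $\la n\ra\lesssim\la\ell\ra(1+|\ell-n|)$ gives polynomial growth in $\ell$, so $A_af\in\cD'(\T)$. For $\ell\notin\Zp$ the triangular condition \eqref{eq:rho-triangular} kills every summand, so $\wh{A_af}(\ell)=0$.

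I also need to justify that the coefficient formula agrees with the extension of $A_a$ used on the spaces $B^s_{p,q}$ and $F^s_{p,q}$. These spaces sit inside $\cD'(\T)$, and trigonometric polynomials are dense in them when $p,q<\infty$. When $p=\infty$ or $q=\infty$ I will instead rely on the fact that both descriptions are continuous into $\cD'(\T)$ (weak-$*$ continuity), so they coincide there as well.

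\textbf{Case $0<\rho<1$.} Let $f\in B^{s+m+\kappa}_{p,q,+}$ (respectively $F^{s+m+\kappa}_{p,q,+}$) with $\kappa>\sigma_p(\rho)$. Proposition~\ref{prop:rho-ambient} gives $A_af\in B^s_{p,q}$ (respectively $F^s_{p,q}$) with the norm bound, and the invariance step gives $A_af\in\cD'_+$. So $A_af$ lies in the positive subspace with the same bound. Transporting through \eqref{eq:boundary-identification} yields the holomorphic statements. The same argument also gives the endpoint versions from Proposition~\ref{prop:rho-endpoint}, although those are not required here.

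\textbf{Case $\rho=1$, $0\le\delta<1$.} Here I need the ambient analogue of Proposition~\ref{prop:ambient} for $S^m_{1,\delta}$. Lemma~\ref{lem:rho-dyadic} with $\rho=1$ gives
\[
 |\Delta_jA_a\Psi_kg(x)|\le C2^{km}2^{-N|j-k|}\mathcal M_rg(x),
\]
which is exactly \eqref{eq:pointwise-AO}. The proof of Proposition~\ref{prop:ambient} used only this pointwise bound, followed by the discrete convolution estimate and the scalar or Fefferman--Stein maximal inequalities. Rerunning that proof verbatim gives the $B$ and $F$ bounds on the full periodic scales for $0<p,q\le\infty$ (with $p<\infty$ for $F$). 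Combining this with the invariance step as above gives Theorem~\ref{thm:main} for $S^m_{1,\delta,\triangle}$.

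\textbf{Main obstacle.} The delicate step is the off-diagonal decay in Lemma~\ref{lem:rho-dyadic} when $\delta>0$, where $x$-derivatives cost $\la n\ra^{\delta}$. That lemma is already proved, so it can be assumed. What remains genuinely to check is the invariance step's convergence when $\delta>0$, which rests entirely on $\delta<1$, and the identification of the extension of $A_a$ in the non-separable cases $p=\infty$ or $q=\infty$. For the latter I would argue by continuity from $\cD'(\T)$ rather than by density.
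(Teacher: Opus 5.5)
Your proposal is correct and follows the paper's proof. For $\rho<1$ you take ambient boundedness from Proposition~\ref{prop:rho-ambient}; for $\rho=1$ you rerun Proposition~\ref{prop:ambient} with the $\rho=1$ case of Lemma~\ref{lem:rho-dyadic}; in both cases you then add positive-frequency invariance and the boundary identification. Your explicit handling of the $\la n\ra^{\delta L}$ loss in the invariance step, as in Proposition~\ref{prop:triangular-composition}, is slightly more careful than the paper's claim that Proposition~\ref{prop:invariance} carries over verbatim.
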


\begin{proof}
For $\rho<1$, Proposition~\ref{prop:rho-ambient} gives the ambient norm
estimate.  Proposition~\ref{prop:invariance} remains valid verbatim because
its proof uses only the triangular condition and the rapid decay of the
$x$-Fourier coefficients for each fixed symbol order.  Restricting the
ambient operator to $\cD'_+$ and using
\eqref{eq:boundary-identification} proves the two displayed maps.

If $\rho=1$, the factor $2^{kd(1-\rho)/r}$ in
\eqref{eq:rho-pointwise} is one.  The proof of
Proposition~\ref{prop:ambient} therefore applies with $\kappa=0$.
The assumption $\delta<1$ supplies the off-diagonal decay in
\eqref{eq:rho-fourier-coeff}.  Positive-frequency invariance and the boundary
identification finish the proof.
\end{proof}
We have the following corollary.
\begin{corollary}
\label{cor:rho-holomorphic-endpoint}
Let $0<\rho<1$, $0\leq\delta<\rho$, and
$a\in S^m_{\rho,\delta,\triangle}(\T\times\Z)$.  Every conclusion of
Proposition~\ref{prop:rho-endpoint} holds on the corresponding
positive-frequency subspaces.  Equivalently,
\begin{align*}
 A_a&:\cB^{s+m+\sigma_p(\rho)}_{p,q}(\D)
       \longrightarrow\cB^s_{p,t}(\D),
 &&0<p\leq\infty,\quad0<q\leq t\leq\infty,\\
 A_a&:\cF^{s+m+\sigma_p(\rho)}_{p,q}(\D)
       \longrightarrow\cF^s_{p,t}(\D),
\end{align*}
where the second map holds under the three Triebel--Lizorkin alternatives in
Proposition~\ref{prop:rho-endpoint}.
\end{corollary}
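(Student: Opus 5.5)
The proof will follow the same two-step pattern as the proofs of Theorem~\ref{thm:main} and Theorem~\ref{thm:rho-holomorphic}: first invoke the ambient endpoint estimate, then restrict it to the positive cone. Fix $s\in\mathbb R$ and parameters $p,q,t$ in one of the admissible ranges of Proposition~\ref{prop:rho-endpoint}. Let $f\in B^{s+m+\sigma_p(\rho)}_{p,q,+}(\T)$, respectively $f\in F^{s+m+\sigma_p(\rho)}_{p,q,+}(\T)$. Since $S^m_{\rho,\delta,\triangle}\subset S^m_{\rho,\delta}(\T\times\Z)$, Proposition~\ref{prop:rho-endpoint} gives
\[
 \norm{A_af}_{B^s_{p,t}(\T)}\leq C\norm f_{B^{s+m+\sigma_p(\rho)}_{p,q}(\T)},
\]
and the analogous Triebel--Lizorkin bound under the three stated alternatives.

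The second step is to show that $A_af\in\cD'_+(\T)$. I will rerun the argument of Proposition~\ref{prop:invariance} with the modification already indicated in the proof of Proposition~\ref{prop:triangular-composition}. For $a\in S^m_{\rho,\delta}$, integrating by parts $L$ times in $x$ gives
\[
 |a_\nu(n)|\leq C_L(1+|\nu|)^{-L}\la n\ra^{m+\delta L}.
\]
Every element of $B^{u}_{p,q}(\T)$ or $F^u_{p,q}(\T)$ is a periodic distribution, so $|\wh f(n)|\leq C\la n\ra^M$ for some $M$. For fixed $\ell\notin\Zp$, the quantity $|\ell-n|$ is comparable to $1+|n|$, so the $n$-th term of \eqref{eq:output-coefficient} is bounded by $\la n\ra^{m+M+\delta L-L}$. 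Because $\delta<1$, choosing $L$ large makes the series converge absolutely.

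This justifies formula \eqref{eq:output-coefficient} for general $f$, and not only for trigonometric polynomials. One way to see this is to approximate $f$ by its Fourier partial sums, which converge in $\cD'$, and use continuity of $A_a$ on $\cD'$. The other is to compute $\wh{A_af}(\ell)$ directly by pairing with $e^{-i\ip\ell x}$, which gives a convergent double series. By \eqref{eq:rho-triangular}, every summand $a_{\ell-n}(n)\wh f(n)$ with $n\in\Zp$ and $\ell\notin\Zp$ vanishes. Hence $\wh{A_af}(\ell)=0$ for $\ell\notin\Zp$, and $A_af$ lies in $B^s_{p,t,+}(\T)$, respectively $F^s_{p,t,+}(\T)$, with the asserted bound.

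Finally, the boundary identification \eqref{eq:boundary-identification} is isometric for every choice of indices. It therefore applies separately on the source side, with index $q$, and on the target side, with index $t$, and converts the positive-cone estimates into the displayed maps between $\cB$ and $\cF$ spaces on $\D$. The only point that needs care is the justification of the coefficient formula for non-polynomial $f$ when $\rho<1$, where the $\delta$-loss in the coefficient decay must be absorbed. The condition $\delta<\rho<1$ handles this, as explained above.
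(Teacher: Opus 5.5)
Your proposal is correct and follows the paper's proof: the ambient bound from Proposition~\ref{prop:rho-endpoint}, positive-frequency invariance via the argument of Proposition~\ref{prop:invariance}, and the boundary identification \eqref{eq:boundary-identification} applied separately with indices $q$ and $t$. You also handle the $\la n\ra^{m+\delta L}$ loss in the coefficient bound explicitly, absorbing it with $\delta<1$; the paper only indicates this in the proof of Proposition~\ref{prop:triangular-composition}.
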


\begin{proof}
Proposition~\ref{prop:rho-endpoint} supplies the ambient bounds, while
Proposition~\ref{prop:invariance} supplies positive-frequency invariance.
Restriction to the positive-frequency subspaces and
\eqref{eq:boundary-identification} give the two holomorphic estimates.
\end{proof}

We finish the subsection by showing that the loss in
Proposition~\ref{prop:rho-ambient} is sharp even after restriction to the
positive cone.  The same example also gives necessity of the Besov
fine-index condition at the endpoint.

\begin{lemma}\label{lem:oscillatory-lower}
Let $0<\rho<1$, $m\in\mathbb R$, and
\[
 b(n)=\la n\ra^m\exp\bigl(i\la n\ra^{1-\rho}\bigr),
 \qquad n\in\Z.
\]
Then $b\in S^m_{\rho,0,\triangle}(\T\times\Z)$.  For every
$0<p\leq\infty$ and every sufficiently large dyadic $R$ there is a
trigonometric polynomial $g_R$ with Fourier spectrum in
\[
 \Zp\cap\{n:cR\leq|n|\leq CR\}
\]
such that
\begin{equation}\label{eq:oscillatory-lower}
 \|b(D)g_R\|_{L^p(\T)}
 \geq c_p R^{m+\sigma_p(\rho)}\|g_R\|_{L^p(\T)}.
\end{equation}
The constants $c,C$ and $c_p>0$ are independent of $R$.
\end{lemma}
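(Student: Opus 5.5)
The proof has three parts: the symbol class and triangularity, then a lower bound for $p\geq2$ by a dispersive example, then $p<2$ by duality.

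\textbf{Symbol class and triangularity.} Since $b$ does not depend on $x$, its $x$-Fourier coefficients are $b_\nu(n)=0$ for $\nu\neq0$ and $b_0(n)=b(n)$. Condition \eqref{eq:rho-triangular} only concerns pairs with $n+\nu\notin\Zp$ and $n\in\Zp$, which forces $\nu\neq0$, so it holds trivially. For the estimates \eqref{eq:rho-delta-symbol} with $\delta=0$, I would use that $\beta\neq0$ gives zero. I would then extend $b$ to the smooth function $\widetilde b(\xi)=\la\xi\ra^m e^{i\la\xi\ra^{1-\rho}}$ on $\mathbb R^d$. Each $\xi$-derivative either hits $\la\xi\ra^m$, costing $\la\xi\ra^{-1}$, or hits the phase, costing at most $\la\xi\ra^{-\rho}$. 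Hence $\widetilde b\in S^m_{\rho,0}(\mathbb R^d)$. Discrete differences are averages of derivatives over unit cubes, so $|\Delta_n^\alpha b(n)|\le C\la n\ra^{m-\rho|\alpha|}$.

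\textbf{Lower bound for $2\le p\le\infty$ (dispersive example).} The operator $b(D)$ is a Fourier multiplier. I would take
\[
 g_R(x)=\sum_n \chi(n/R)\,e^{-i\la n\ra^{1-\rho}}e^{i\ip n x},
\]
where $\chi$ is a smooth bump supported in a small ball around the point $(1,\dots,1)/\sqrt d$, so that the support lies inside the open positive cone and in the annulus $cR\le|n|\le CR$. Then
\[
 b(D)g_R=\sum_n\chi(n/R)\la n\ra^m e^{i\ip n x}\approx R^m\,\Phi_R(x),
\]
where $\Phi_R$ is a modulated Dirichlet-type kernel of size $R^d$ on a ball of radius $R^{-1}$, up to rapidly decaying tails. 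This gives $\|b(D)g_R\|_{L^p}\gtrsim R^{m+d-d/p}$.

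For $g_R$ itself I would apply stationary phase in Poisson-summed form. By the Poisson summation formula, $g_R$ is a periodization of $\int\chi(\xi/R)e^{-i\la\xi\ra^{1-\rho}+i\xi\cdot x}\,d\xi$. The phase gradient is $(1-\rho)\xi\la\xi\ra^{-1-\rho}$, of size $R^{-\rho}$. The Hessian has eigenvalues of size $R^{-1-\rho}$, and it is nondegenerate because $\la\xi\ra^{1-\rho}$ is strictly concave-free in the radial direction with $1-\rho\neq1$, and also in the angular directions. Stationary phase therefore gives $|g_R|\approx R^{d}\cdot R^{-d(1-\rho)/2}\cdot R^{-d/2}\cdots$. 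More cleanly: $g_R$ is essentially spread over a region of diameter about $R^{-\rho}$, with $|g_R|\sim R^{d/2}R^{d(1-\rho)/2}\cdot R^{-d(1-\rho)}$ there. By Plancherel $\|g_R\|_{L^2}\sim R^{d/2}$, so $|g_R|\sim R^{d/2}\cdot R^{\rho d/2}$ on a set of measure $R^{-\rho d}$. This gives
\[
 \|g_R\|_{L^p}\sim R^{d/2+\rho d/2-\rho d/p}.
\]
The ratio is then $R^{m+d-d/p-d/2-\rho d/2+\rho d/p}=R^{m+d(1-\rho)(1/2-1/p)}$, which is the claim for $p\ge2$. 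Only the upper bound on $\|g_R\|_{L^p}$ is needed. It follows from the $L^\infty$ bound $|g_R|\lesssim R^{d(1+\rho)/2}$ (stationary phase or van der Corput) together with rapid decay outside the $CR^{-\rho}$-ball (nonstationary phase, by integrating by parts in $\xi$). For $p<1$ the same pointwise bounds suffice.

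\textbf{Lower bound for $p<2$ (duality).} I would reverse the roles and take $g_R=\sum\chi(n/R)e^{i\ip n x}$, a concentrated bump with $\|g_R\|_{L^p}\sim R^{d-d/p}$. Then $b(D)g_R$ is the dispersed function: it is $R^m$ times the conjugate of the previous $g_R$. The previous step's upper bound on $\|g_R\|_{L^p}$ must now be replaced by a lower bound on the dispersed function. I would get it from its $L^2$ norm and $L^\infty$ bound via Hölder: $\|h\|_2^2\le\|h\|_p^{\theta}\|h\|_\infty^{2-\theta}$ restricted to the $R^{-\rho}$-ball, which gives $\|h\|_{L^p}\gtrsim R^{d/2+\rho d/2-\rho d/p}$ for $p<2$. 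The ratio is then $R^{m+d(1-\rho)(1/p-1/2)}$.

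\textbf{Main obstacle.} The main obstacle is the uniform stationary-phase and nonstationary-phase control on $\T$. This means handling the periodization: the other translates contribute negligibly because the spread radius is $R^{-\rho}\ll1$. It also means verifying that the Hessian of $\la\xi\ra^{1-\rho}$ on the conic support is nondegenerate with eigenvalues comparable to $R^{-1-\rho}$. I would deal with the discreteness via Poisson summation with a smooth cutoff, so that all errors are $O(R^{-\infty})$.
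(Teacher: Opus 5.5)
Your proof is correct, but it is organized differently from the paper's.

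\textbf{The paper's route.} For $0<p<2$ the paper uses the concentrated input $\sum_n\chi(n/R)e^{i\ip n x}$ and proves a \emph{pointwise lower} bound $|b(D)g_R(R^{-\rho}y)|\geq cR^{m+d-d(1-\rho)/2}$ for $y$ in an open set $U$. This needs the full uniform stationary-phase expansion with a nonvanishing leading term. Hence the paper must check that $\xi\mapsto-\nabla|\xi|^{1-\rho}$ is a diffeomorphism and that the critical point stays where $\chi$ is bounded below. It then obtains $2<p\leq\infty$ by genuine $L^p$--$L^{p'}$ duality. It applies the $p'<2$ case to $\overline b$ and uses the uniformly bounded self-adjoint cone projection $Q_R$ to keep the test function's spectrum in $\Zp$.

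\textbf{Your route.} For $p\geq2$ you write down an explicit extremizer: a pre-dispersed packet that $b(D)$ refocuses into a bump. For $p<2$ you replace the pointwise lower bound by $\norm{h}_{L^2}^2\leq\norm{h}_{L^p}^p\norm{h}_{L^\infty}^{2-p}$ together with Plancherel. What you call duality there is really the time-reversed example.

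\textbf{Comparison.} The payoff of your route is that you only ever need the \emph{upper} bound $\sup_x|\sum_n\chi(n/R)\la n\ra^{m}e^{\pm i\la n\ra^{1-\rho}}e^{i\ip n x}|\lesssim R^{m+d(1+\rho)/2}$. An upper bound is more robust than a lower bound, since it needs no nonvanishing leading coefficient and no localization of the critical point. You also need no duality or projection, including at $p=\infty$. In exchange, the paper's route identifies where on $\T$ the dispersed output actually lives.

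\textbf{Possible simplifications.} The H\"older step works globally on $\T$, so the restriction to the $R^{-\rho}$-ball is unnecessary. For $p\geq2$, the interpolation $\norm{g_R}_{L^p}\leq\norm{g_R}_{L^2}^{2/p}\norm{g_R}_{L^\infty}^{1-2/p}$ gives the required upper bound without the nonstationary-phase tail estimate. Your preliminary heuristic size computations before ``more cleanly'' are arithmetically off, but the bound $R^{d(1+\rho)/2}$ you actually use is correct.
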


\begin{proof}
Finite differences of the restriction of
$\la\xi\ra^m e^{i\la\xi\ra^{1-\rho}}$ satisfy
\[
 |\Delta_n^\alpha b(n)|\leq C_\alpha
 \la n\ra^{m-\rho|\alpha|},
\]
which follows either from the mean-value formula for finite differences or
by differentiating the Euclidean function.  Thus $b\in S^m_{\rho,0}$, and
it is triangular because it is independent of $x$.

We prove the lower estimate first for $0<p<2$.  Choose a point
$\xi_0\in(0,\infty)^d$ and a nonnegative
$\chi\in C_c^\infty(\mathbb R^d)$ with $\chi(\xi_0)>0$ and whose
support is contained in a sufficiently small annular neighbourhood of
$\xi_0$ inside a cone $\Gamma\Subset(0,\infty)^d$.  Set
\[
 g_R(x)=\sum_{n\in\Z}\chi(n/R)e^{i\ip n x}.
\]
Only positive-cone frequencies occur when $R$ is large.  Poisson summation
gives, up to the fixed Fourier-normalization constant,
\[
 g_R(x)=R^d\sum_{\ell\in\Z}\widehat\chi\bigl(R(x+2\pi\ell)\bigr).
\]
Rapid decay gives the upper bound below for $p\geq1$ and for $p=\infty$.
For $0<p<1$, use $(\sum_\ell c_\ell)^p\leq\sum_\ell c_\ell^p$ and integrate
over a fundamental domain; the translates then tile $\mathbb R^d$.  For the
lower bound, $\widehat\chi(0)>0$, so on a ball of radius $c/R$ the term
$\ell=0$ is bounded below by $cR^d$, while all other terms are negligible
by rapid decay.  Consequently
\begin{equation}\label{eq:input-packet-size}
 \|g_R\|_{L^p(\T)}\asymp R^{d(1-1/p)},
 \qquad 0<p\leq\infty.
\end{equation}
For the output, write $x=R^{-\rho}y$.  After extracting the factor
$R^{m+d}$, the zero term in Poisson summation has amplitude
\[
 A_R(\xi)=R^{-m}\la R\xi\ra^m\chi(\xi)
\]
and large parameter $R^{1-\rho}$ multiplying the phase
\[
 \Phi_{R,y}(\xi)
 =R^{\rho-1}\la R\xi\ra^{1-\rho}+y\mathbin\cdot\xi.
\]
Uniformly for $y$ in compact sets, $A_R$ converges in
$C^\infty(\supp\chi)$ to $|\xi|^m\chi(\xi)$ and $\Phi_{R,y}$ converges to
\[
 \Phi_y(\xi)=|\xi|^{1-\rho}+y\mathbin\cdot\xi.
\]
After shrinking the support of $\chi$, the map
\[
 \xi\longmapsto-\nabla|\xi|^{1-\rho}
\]
is a diffeomorphism on a neighbourhood of $\supp\chi$.  Indeed,
\[
 \nabla|\xi|^{1-\rho}=(1-\rho)|\xi|^{-1-\rho}\xi
\]
and
\[
 D^2|\xi|^{1-\rho}
 =(1-\rho)|\xi|^{-1-\rho}\operatorname{Id}_{d\times d}
 -(1-\rho^2)|\xi|^{-3-\rho}\xi\otimes\xi.
\]
The tangential eigenvalues are $(1-\rho)|\xi|^{-1-\rho}$ and the radial
eigenvalue is $-\rho(1-\rho)|\xi|^{-1-\rho}$.  Thus
\[
 \det D^2|\xi|^{1-\rho}
 =-\rho(1-\rho)^d|\xi|^{-d(1+\rho)}\ne0,
 \qquad \xi\ne0.
\]
The phase is therefore uniformly nondegenerate on $\supp\chi$.  Taking a
relatively compact neighbourhood $U$ of
$-\nabla|\xi_0|^{1-\rho}$ sufficiently small ensures that the unique
critical point lies in a fixed compact subset on which $\chi$ is bounded
below.  The leading stationary-phase coefficient is
therefore bounded away from zero uniformly for $y\in U$, while the uniform
remainder is of strictly lower order.  Uniform stationary phase then gives,
for all sufficiently large $R$,
\begin{equation}\label{eq:stationary-packet-size}
 |b(D)g_R(R^{-\rho}y)|
 \geq cR^{m+d-d(1-\rho)/2},\qquad y\in U.
\end{equation}
For a nonzero Poisson index $\ell$, the unscaled phase contains
$2\pi R\ell\mathbin\cdot\xi$; its gradient is bounded below by
$cR|\ell|$ uniformly for $y\in U$.  Integration by parts and summation in
$\ell$ show that the total contribution of the nonzero terms is
$O(R^{-N})$ for every $N$.  Since the set $R^{-\rho}U$ has measure comparable to
$R^{-\rho d}$, \eqref{eq:stationary-packet-size} yields
\[
 \|b(D)g_R\|_{L^p}
 \gtrsim R^{m+d-d(1-\rho)/2-\rho d/p}.
\]
Dividing by \eqref{eq:input-packet-size} gives
\eqref{eq:oscillatory-lower}, because for $p<2$
\[
 d-d(1-\rho)/2-\rho d/p-d(1-1/p)
 =d(1-\rho)(1/p-1/2).
\]
We now treat $2<p\leq\infty$ by duality.  Put $p'=p/(p-1)$, with
$p'=1$ when $p=\infty$, and let
\[
 b^*(n)=\overline{b(n)}
 =\la n\ra^m\exp\bigl(-i\la n\ra^{1-\rho}\bigr).
\]
The preceding construction is unchanged when the sign of the phase is
reversed.  Since $1\leq p'<2$, it supplies a polynomial $h_R$, supported in
$R\Gamma\cap\Zp$, such that
\begin{equation}\label{eq:dual-packet-lower}
 \|b^*(D)h_R\|_{L^{p'}}
 \geq cR^{m+\sigma_{p'}(\rho)}\|h_R\|_{L^{p'}}.
\end{equation}
Choose a real function $\vartheta\in C_c^\infty(\Gamma)$ which equals one
on a neighbourhood of $\supp\chi$, and let
\[
 Q_Rf=\sum_{n\in\Z}\vartheta(n/R)\wh f(n)e^{i\ip n x}.
\]
Its convolution kernel has uniformly bounded $L^1$ norm, by Poisson
summation, so
\begin{equation}\label{eq:uniform-annular-projection}
 \|Q_Rf\|_{L^p}\leq C_p\|f\|_{L^p},
 \qquad 1\leq p\leq\infty,
\end{equation}
with a constant independent of $R$.  Moreover, $Q_R$ is self-adjoint,
$Q_Rb^*(D)h_R=b^*(D)h_R$, and its range has Fourier spectrum in $\Zp$ for
large $R$.

By $L^p$--$L^{p'}$ duality, choose $u_R\in L^p(\T)$ with
$\|u_R\|_{L^p}=1$ such that
\[
 \left|\int_\T u_R(x)\overline{b^*(D)h_R(x)}
       \frac{\dd x}{(2\pi)^d}\right|
 \geq\frac12\|b^*(D)h_R\|_{L^{p'}}.
\]
For $p=\infty$, one may take the pointwise complex sign of
$b^*(D)h_R$, so the factor $1/2$ is unnecessary.  Set
$f_R=Q_Ru_R$.  Then \eqref{eq:uniform-annular-projection} gives
$\|f_R\|_{L^p}\leq C_p$, while self-adjointness of $Q_R$ and the adjoint
identity $b(D)^*=b^*(D)$ give
\begin{align*}
 \|b(D)f_R\|_{L^p}\|h_R\|_{L^{p'}}
 &\geq
 \left|\int_\T b(D)f_R(x)\overline{h_R(x)}
       \frac{\dd x}{(2\pi)^d}\right|\\
 &=\left|\int_\T u_R(x)\overline{b^*(D)h_R(x)}
       \frac{\dd x}{(2\pi)^d}\right|\\
 &\geq\frac12\|b^*(D)h_R\|_{L^{p'}}.
\end{align*}
After dividing by $\|h_R\|_{L^{p'}}$ and then by the uniformly bounded
$\|f_R\|_{L^p}$, \eqref{eq:dual-packet-lower} proves
\eqref{eq:oscillatory-lower}, because
$\sigma_{p'}(\rho)=\sigma_p(\rho)$.  The polynomial $f_R$ has the required
positive-cone annular spectrum.
At $p=2$, a single exponential with
frequency in the relevant set gives the assertion, since $\sigma_2(\rho)=0$.
\end{proof}
We have the following sharpness result.
\begin{theorem}\label{thm:periodic-sharpness}
Let $0<\rho<1$, $0\leq\delta<\rho$, $m,s\in\mathbb R$, and
$0<p\leq\infty$.
\begin{enumerate}[label=\textup{(\roman*)}]
\item If, for every $a\in S^m_{\rho,\delta,\triangle}$, one has either
\[
 A_a:B^{s+m+\kappa}_{p,q,+}(\T)\longrightarrow B^s_{p,t,+}(\T)
\]
or, when $p<\infty$,
\[
 A_a:F^{s+m+\kappa}_{p,q,+}(\T)\longrightarrow F^s_{p,t,+}(\T),
\]
then $\kappa\geq\sigma_p(\rho)$.
\item At $\kappa=\sigma_p(\rho)$, boundedness
\[
 A_a:B^{s+m+\sigma_p(\rho)}_{p,q,+}(\T)
 \longrightarrow B^s_{p,t,+}(\T)
\]
for every $a\in S^m_{\rho,\delta,\triangle}$ is possible only if
$q\leq t$.
\end{enumerate}
Both conclusions hold, through boundary values, on the corresponding
holomorphic spaces on $\D$.
\end{theorem}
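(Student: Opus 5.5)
We argue by contradiction, testing the hypothesized bounds against the single triangular multiplier $b(n)=\la n\ra^m\exp(i\la n\ra^{1-\rho})$ of Lemma~\ref{lem:oscillatory-lower}. Since $\delta\geq0$, we have $S^m_{\rho,0,\triangle}\subseteq S^m_{\rho,\delta,\triangle}$, so the hypothesis applies to $b(D)$. Both parts rest on one reduction. If $h$ has Fourier spectrum in an annulus $\{cR\leq|n|\leq CR\}$ with $R$ dyadic, then only boundedly many Littlewood--Paley pieces $\Delta_jh$ are nonzero, namely those with $2^j\asymp R$. Since $h=\sum_j\Delta_jh$, we obtain
\[
 \|h\|_{B^u_{p,q}(\T)}\asymp\|h\|_{F^u_{p,q}(\T)}\asymp R^u\|h\|_{L^p(\T)},
\]
with constants independent of $R$, $q$ and the particular $h$.

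For the upper bound we need $\|\Delta_jh\|_{L^p}\lesssim\|h\|_{L^p}$ uniformly for such $h$, including $0<p<1$. We get this by writing $\Delta_jh=\Delta_j\Psi_kh$ and using the kernel bound for $\Delta_j$ together with Lemma~\ref{lem:max-domination}. This gives $|\Delta_jh|\lesssim\mathcal M_rh$, and then we take $r<p$. The lower bound follows from the $p$-quasi-triangle inequality applied to the finite sum $h=\sum_{2^j\asymp R}\Delta_jh$. In the $F$ case the pointwise $\ell^q$ norm over a bounded number of indices is comparable to the sum, so the same argument applies. This uniform equivalence, especially for $p<1$, is the main technical point, and we expect it to be the principal obstacle.

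\emph{Part (i).} Take $g_R$ from Lemma~\ref{lem:oscillatory-lower}. It lies in $\cD'_+(\T)$, and so does $b(D)g_R$, whose spectrum sits in the same annulus. The reduction and \eqref{eq:oscillatory-lower} give
\[
 R^{s}R^{m+\sigma_p(\rho)}\|g_R\|_{L^p}
 \lesssim R^s\|b(D)g_R\|_{L^p}
 \asymp\|b(D)g_R\|_{Y^s_{p,t}}
 \lesssim\|g_R\|_{Y^{s+m+\kappa}_{p,q}}
 \asymp R^{s+m+\kappa}\|g_R\|_{L^p},
\]
where $Y\in\{B,F\}$. Letting $R\to\infty$ along dyadic values forces $\kappa\geq\sigma_p(\rho)$.

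\emph{Part (ii).} Suppose $q>t$ and that the endpoint bound holds with some constant $C$. Choose lacunary dyadic scales $R_k=2^{4k+k_0}$, so that the annuli $\{cR_k\leq|n|\leq CR_k\}$ meet pairwise disjoint, non-adjacent sets of Littlewood--Paley indices. Normalize the test functions by
\[
 \tilde g_k:=\frac{g_{R_k}}{R_k^{s+m+\sigma_p(\rho)}\|g_{R_k}\|_{L^p}}.
\]
For a finite sequence $(c_k)$ set $f=\sum_kc_k\tilde g_k\in\cD'_+(\T)$. Each $\Delta_jf$ then involves only one block, so the reduction applied blockwise gives $\|f\|_{B^{s+m+\sigma_p}_{p,q}}\asymp\|c\|_{\ell^q}$. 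The same holds for the output $b(D)f=\sum_kc_k\,b(D)\tilde g_k$, whose blocks remain separated. Using \eqref{eq:oscillatory-lower} we get
\[
 \|b(D)f\|_{B^s_{p,t}}\asymp\bigl\|\{c_kR_k^s\|b(D)\tilde g_k\|_{L^p}\}_k\bigr\|_{\ell^t}\gtrsim\|c\|_{\ell^t}.
\]
Hence $\|c\|_{\ell^t}\lesssim C\|c\|_{\ell^q}$ uniformly over finite sequences. This is false when $t<q$: take $c_k=1$ for $k\leq K$, which gives $K^{1/t}\lesssim K^{1/q}$, and let $K\to\infty$.

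Finally, both conclusions transfer to $\cB^s_{p,q}(\D)$ and $\cF^s_{p,q}(\D)$ through the isometric identifications \eqref{eq:boundary-identification}.
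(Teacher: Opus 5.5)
Your proposal is correct and follows the paper's proof essentially step for step. It uses the same oscillatory multiplier $b$ from Lemma~\ref{lem:oscillatory-lower}, the same single-annulus reduction of the $B$ and $F$ quasi-norms to $R^u\|\cdot\|_{L^p}$ for part (i), and the same lacunary superposition forcing $\ell^q\hookrightarrow\ell^t$ for part (ii); your maximal-function justification of the uniform annulus equivalence for $p<1$ supplies a step the paper only asserts.

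One small adjustment is needed: the fixed spacing $R_k=2^{4k+k_0}$ should be $R_k=2^{Mk+k_0}$ with $M$ chosen large in terms of the ratio $C/c$ from the lemma. Otherwise distinct annuli could share a Littlewood--Paley index.
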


\begin{proof}
Use the multiplier $b$ of Lemma~\ref{lem:oscillatory-lower}, which belongs
to $S^m_{\rho,0,\triangle}\subset S^m_{\rho,\delta,\triangle}$.  On a
single dyadic annulus the Besov and Triebel--Lizorkin quasi-norms both reduce
to $R^u$ times the $L^p$ quasi-norm.  Consequently
\eqref{eq:oscillatory-lower} contradicts either map in part~(i) when
$\kappa<\sigma_p(\rho)$.

For part~(ii), choose the dyadic sequence $R_j$ sufficiently lacunary that
the Fourier supports of the corresponding $g_{R_j}$ lie in disjoint
Littlewood--Paley annuli and the associated projection equals one on each
support.  The outputs $b(D)g_{R_j}$ have the same separation because
$b(D)$ is a multiplier.  Normalize $\|g_{R_j}\|_{L^p}=1$.  For every finitely supported scalar
sequence $\{\lambda_j\}$, put
\[
 f=\sum_j\lambda_jR_j^{-s-m-\sigma_p(\rho)}g_{R_j}.
\]
The Littlewood--Paley definition now shows that the source Besov quasi-norm is comparable to
$\|\lambda\|_{\ell^q}$, whereas
Lemma~\ref{lem:oscillatory-lower} gives
\[
 \|b(D)f\|_{B^s_{p,t}}\gtrsim\|\lambda\|_{\ell^t}.
\]
Thus boundedness would imply the embedding $\ell^q\hookrightarrow\ell^t$, which is
equivalent to $q\leq t$.  All test functions have spectrum in $\Zp$, and
the boundary identifications \eqref{eq:boundary-identification} transfer
the failures to the holomorphic spaces.
\end{proof}

\begin{remark}
Theorem~\ref{thm:periodic-sharpness} proves sharpness of the derivative loss
and the Besov endpoint restriction on both the periodic positive-cone and
holomorphic scales.  It does not establish the necessity of all the
Triebel--Lizorkin secondary-index alternatives in
Proposition~\ref{prop:rho-endpoint}; adapting the spatial overlap and
randomization portions of Park's Euclidean examples to the positive lattice
would require a separate argument.  We leave this question open.
\end{remark}

\section{Application to a holomorphic differential equation}\label{sec:application}
We conclude this paper by applying the $S^m_{1,0}$ part of the main
boundedness result to a first-order operator whose associated symbol can be
written down explicitly.

Fix $\lambda\in\mathbb C$ and consider
\begin{equation}\label{eq:application-L}
 L_\lambda u(z):=J_du(z)+\lambda z_1u(z),
 \qquad
 J_d:=d+\sum_{j=1}^dz_j\partial_{z_j}.
\end{equation}
Writing $|n|_+=n_1+\cdots+n_d$ for $n\in\Zp$, one has
\[
 J_du=\sum_{n\in\Zp}(d+|n|_+)\wh u(n)z^n.
\]
Thus $L_\lambda=J_d+\lambda M_{z_1}$.  On the distinguished boundary
$z_j=e^{ix_j}$, this is the toroidal operator with symbol
\begin{equation}\label{eq:application-symbol}
 a_\lambda(x,n)=d+n_1+\cdots+n_d+\lambda e^{ix_1},
 \qquad n\in\Zp.
\end{equation}
The same formula defines its extension to every $n\in\Z$; only its
restriction to $\Zp$ acts on the holomorphic subspace.
The symbol belongs to $S^1_{1,0,\triangle}$.  Indeed,
\[
 |\Delta_n^\alpha a_\lambda(x,n)|\leq C_\alpha
 \la n\ra^{1-|\alpha|},
\]
and every positive-order $x$-derivative is a constant multiple of
$e^{ix_1}$.  Moreover, its only $x$-Fourier frequencies are $0$ and $e_1$,
so it is
analytic in $x$ and hence triangular.  Theorem~\ref{thm:main} implies
\begin{align}
 L_\lambda&:\cB^{s+1}_{p,q}(\D)\longrightarrow\cB^s_{p,q}(\D),
 &&0<p,q\leq\infty,
 \label{eq:application-B-forward}\\
 L_\lambda&:\cF^{s+1}_{p,q}(\D)\longrightarrow\cF^s_{p,q}(\D),
 &&0<p<\infty,\quad0<q\leq\infty.
 \label{eq:application-F-forward}
\end{align}

For the remainder of this section, $\cX^s_{p,q}(\D)$ denotes either
$\cB^s_{p,q}(\D)$, in the range $0<p,q\leq\infty$, or
$\cF^s_{p,q}(\D)$, in the range $0<p<\infty$ and $0<q\leq\infty$.

\begin{proposition}
\label{prop:application}
For every $\lambda\in\mathbb C$ and every admissible $s,p,q$, the operator
$L_\lambda$ is a topological isomorphism
\[
 L_\lambda:\cX^{s+1}_{p,q}(\D)\longrightarrow\cX^s_{p,q}(\D).
\]
Hence the equation
\begin{equation}\label{eq:application-equation}
 L_\lambda u=f
\end{equation}
has a unique solution $u\in\cX^{s+1}_{p,q}(\D)$ for every
$f\in\cX^s_{p,q}(\D)$, and
\begin{equation}\label{eq:application-estimate}
 \norm{u}_{\cX^{s+1}_{p,q}}
 \leq C_{\lambda,s,p,q}\norm{f}_{\cX^s_{p,q}}.
\end{equation}
\end{proposition}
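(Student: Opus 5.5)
We plan to construct the inverse of $L_\lambda$ explicitly on the coefficient level. Then we show that this inverse is bounded by writing it as a convergent series of positive triangular operators, each controlled by Theorem~\ref{thm:main}. In boundary coefficients, with $u=\sum_{n\in\Zp}\wh u(n)z^n$, the equation \eqref{eq:application-equation} reads
\[
 (d+|n|_+)\wh u(n)+\lambda\wh u(n-e_1)=\wh f(n),\qquad n\in\Zp,
\]
where $\wh u(n-e_1):=0$ if $n_1=0$. This recursion is triangular in $n_1$ and $d+|n|_+\geq d>0$, so it has exactly one solution in $\cD'_+(\T)$:
\[
 \wh u(n)=\sum_{k=0}^{n_1}(-\lambda)^k\frac{\wh f(n-ke_1)}{\prod_{i=0}^{k}(d+|n|_+-i)}.
\]
The solution has polynomially bounded coefficients when $\wh f$ does, since the denominators grow. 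Uniqueness in $\cX^{s+1}_{p,q}$ is therefore automatic, and the whole task is the estimate \eqref{eq:application-estimate}. Together with \eqref{eq:application-B-forward}--\eqref{eq:application-F-forward}, this estimate gives the topological isomorphism.

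First, we realize $J_d^{-1}$ as an operator in $S^{-1}_{1,0,\triangle}$. Choose a function $\phi$ on $\mathbb R^d\setminus\{0\}$ that is homogeneous of degree one, smooth and strictly positive, and equals $\xi_1+\cdots+\xi_d$ on a conic neighbourhood of the closed orthant. This is possible because $\xi_1+\cdots+\xi_d\geq|\xi|$ there, so the function is positive on a neighbourhood of the orthant in the sphere and can be extended positively. Then $(d+\phi(\xi))^{-1}$, corrected near the origin so that it equals $1/(d+|n|_+)$ at the finitely many affected points of $\Zp$, restricts to an $x$-independent symbol $j\in S^{-1}_{1,0}$. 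Such a symbol is trivially triangular, and $j(D)=J_d^{-1}$ on $\cD'_+$. (The naive extension $1/(d+\sum|n_j|)$ fails the symbol estimates near the coordinate hyperplanes, which is why we use the homogeneous extension.) Put $K:=J_d^{-1}M_{z_1}$. Then $J_d^{-1}L_\lambda=I+\lambda K$, and the formula above is $u=\sum_{k\geq0}(-\lambda)^kK^kJ_d^{-1}f$. On $\cD'_+$, the operator $K^k$ has symbol $e^{ikx_1}c_k(n)$, where
\[
 c_k(n)=\prod_{i=1}^{k}(d+|n|_++i)^{-1}\quad\text{for }n\in\Zp.
\]
Again we extend $c_k$ using $\phi$ in place of $|n|_+$. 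The symbol $e^{ikx_1}c_k(n)$ lies in $S^0_{1,0,\triangle}$, because its only $x$-frequency is $ke_1$.

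The main obstacle is summing the series in operator quasi-norm. By the remark after Proposition~\ref{prop:ambient}, the bound in Theorem~\ref{thm:main} depends only on finitely many seminorms of the symbol, say $x$-derivatives of order at most $B$ and differences of order at most $A$. For $e^{ikx_1}c_k$, the $x$-derivatives contribute at most $k^{B}$. The differences of $c_k$ are handled by the Leibniz rule applied to the product of $k$ factors. Each factor satisfies $(d+\phi(n)+i)^{-1}\leq (d+i)^{-1}$, and each difference of a factor gains $\la n\ra^{-1}$; this gives
\[
 \sup_n\la n\ra^{|\alpha|}\bigl|\Delta_n^\alpha c_k(n)\bigr|\leq C_A\,k^{A}\prod_{i=1}^{k}(d+i)^{-1}\leq C_A\frac{k^{A}}{k!}.
\]
Hence $\|K^kJ_d^{-1}\|_{\cX^s\to\cX^{s+1}}\leq C\,k^{A+B}/k!$. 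Taking $\tau=\min(1,p,q)$ and using the $\tau$-triangle inequality, the series $\sum_k|\lambda|^{k\tau}(k^{A+B}/k!)^\tau$ converges for every $\lambda\in\mathbb C$. This proves $\|u\|_{\cX^{s+1}_{p,q}}\leq C_{\lambda,s,p,q}\|f\|_{\cX^s_{p,q}}$. Finally, we check on coefficients that the resulting operator $T$ satisfies $L_\lambda T=TL_\lambda=I$ on $\cX$: each coefficient of $Tf$ is a finite sum, so this is exactly the recursion. Combined with the forward bounds, this gives the isomorphism.
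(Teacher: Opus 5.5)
Your argument is correct, but it takes a different route from the paper. The paper never sums a series. It uses the integrating-factor identity $M_{e^{-\lambda z_1}}J_dM_{e^{\lambda z_1}}=L_\lambda$, so $L_\lambda^{-1}=M_{e^{-\lambda z_1}}J_d^{-1}M_{e^{\lambda z_1}}$ is a product of three fixed operators. The two multiplications have analytic, $n$-independent symbols $e^{\pm\lambda e^{ix_1}}\in S^0_{1,0,\triangle}$ and are handled by Corollary~\ref{cor:analytic}. $J_d^{-1}$ is an order $-1$ multiplier handled by Theorem~\ref{thm:main}. Uniqueness comes from $J_d(e^{\lambda z_1}u)=0$.

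\textbf{What each route needs.} Since the paper applies Theorem~\ref{thm:main} only to three fixed symbols, it needs no information on how the operator bound depends on the symbol. Your Neumann series $\sum_k(-\lambda)^kK^kJ_d^{-1}$ with $K=J_d^{-1}M_{z_1}$ does need it. You use the remark after Proposition~\ref{prop:ambient} in the quantitative form $\|A_a\|\le C\max_{|\alpha|\le A,\,|\beta|\le B}(\text{seminorms of }a)$. This follows from linearity of $a\mapsto A_a$ and the structure of Lemma~\ref{lem:kernel}, but the paper only sketches it. You combine it with the Leibniz count showing that the seminorms of $e^{ikx_1}c_k$ are $O(k^{A+B}/k!)$, uniformly in the factor index $i$.

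\textbf{What your route buys.} You get an explicit, norm-convergent Volterra-type representation of the solution operator, which makes the quasi-nilpotence of $K$ visible and does not depend on spotting an integrating factor. The coefficient recursion also gives uniqueness in all of $\cD'_+$ directly.

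\textbf{The multiplier extension.} Your remark about extending the multiplier is well taken, and it applies to the paper's own argument. For $d\ge2$ the symbol $(d+\|n\|_1)^{-1}$ used there is not in $S^{-1}_{1,0}$. At $n=(-1,N,0,\dots,0)$ one gets $\Delta_{n_1}^2(d+\|n\|_1)^{-1}=-2/((d+N)(d+N+1))$, which is of size $\la n\ra^{-2}$ rather than $\la n\ra^{-3}$. Only the values on $\Zp$ matter on $\cD'_+$, so your smooth homogeneous extension via $\phi$ is exactly what repairs the $J_d^{-1}$ step in the paper's factorization as well.
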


\begin{proof}
The identity
\[
 M_{e^{-\lambda z_1}}J_dM_{e^{\lambda z_1}}
 =J_d+\lambda M_{z_1}=L_\lambda
\]
gives
\begin{equation}\label{eq:application-solution}
 u(z)=e^{-\lambda z_1}\int_0^1
 t^{d-1}e^{\lambda t z_1}f(tz)\dd t.
\end{equation}

Define
\[
 J_d^{-1}h(z):=\int_0^1t^{d-1}h(tz)\dd t
 =\sum_{n\in\Zp}\frac{\wh h(n)}{d+|n|_+}z^n.
\]
Formula \eqref{eq:application-solution} gives the factorization
\begin{equation}\label{eq:application-inverse}
 L_\lambda^{-1}
 =M_{e^{-\lambda z_1}}J_d^{-1}M_{e^{\lambda z_1}}.
\end{equation}
The multiplication operators $M_{e^{\pm\lambda z_1}}$ have toroidal symbols
\[
 b_\pm(x,n)=e^{\pm\lambda e^{ix_1}}.
\]
These symbols are independent of $n$, smooth and analytic in $x$, and hence
belong to $S^0_{1,0,\triangle}$.  Corollary~\ref{cor:analytic} therefore
shows that the two multiplication operators are bounded on every
$\cX^s_{p,q}(\D)$.

For $n\in\Z$, put $\|n\|_1:=|n_1|+\cdots+|n_d|$.  The multiplier
$b(n)=(d+\|n\|_1)^{-1}$ belongs to $S^{-1}_{1,0,\triangle}$ and agrees with
$(d+n_1+\cdots+n_d)^{-1}$ on
positive-frequency distributions.  It is triangular because it is
independent of $x$.  Theorem~\ref{thm:main} for an order
$m=-1$ symbol therefore states
\[
 J_d^{-1}:\cX^{t-1}_{p,q}(\D)\longrightarrow\cX^t_{p,q}(\D).
\]
Taking $t=s+1$ gives precisely
\[
 J_d^{-1}:\cX^s_{p,q}(\D)\longrightarrow
        \cX^{s+1}_{p,q}(\D).
\]
The factorization proves \eqref{eq:application-estimate} and existence.

For uniqueness, suppose $L_\lambda u=0$.  Then
$J_d(e^{\lambda z_1}u)=0$.  Since $J_d$ multiplies the Taylor coefficient
of $z^n$ by the strictly positive number $d+|n|_+$, one has $u=0$.  Finally,
\eqref{eq:application-B-forward}--\eqref{eq:application-F-forward} give
continuity of $L_\lambda$, while \eqref{eq:application-inverse} gives
continuity of its inverse.  Thus it is a topological isomorphism.
\end{proof}

\begin{remark}\label{rem:ellipticity}
The preceding isomorphism does not follow from toroidal ellipticity alone.
Toroidal ellipticity need not imply invertibility after restriction to the
holomorphic subspace.  Indeed, the elliptic symbol
\[
 a(x,n)=e^{ix_1}
\]
induces the unilateral shift
\[
 f(z)\longmapsto z_1f(z),
\]
whose range consists of the functions divisible by $z_1$ and hence has
infinite codimension when $d>1$ (and codimension one when $d=1$).  The
invertibility of $L_\lambda$ in
Proposition~\ref{prop:application} instead follows from the explicit
factorization \eqref{eq:application-inverse}.  More generally, a holomorphic
parametrix theory on the polydisk must account for defect spaces which can be
infinite-dimensional when $d>1$; ordinary one-variable Fredholm theory does
not transfer unchanged.
\end{remark}

\end{document}